\tikzstyle arrowstyle=[scale=1.5]
\tikzstyle directed=[postaction={decorate,decoration={markings,mark=at position .5 with {\arrow[arrowstyle]{stealth}}}}]
\tikzstyle reverse directed=[postaction={decorate,decoration={markings,mark=at position .5 with {\arrowreversed[arrowstyle]{stealth};}}}]
\newtheorem{theorem}{Theorem}[section]
\newtheorem{lemma}[theorem]{Lemma}
\newtheorem{proposition}[theorem]{Proposition}
\theoremstyle{definition}
\newtheorem{definition}[theorem]{Definition}
\newtheorem{remark}[theorem]{Remark}
\newtheorem*{rmk}{Remark}
\newtheorem*{pro}{Proposition}
\numberwithin{equation}{section}
\theoremstyle{corollary}
\theoremstyle{conjecture}
\newtheorem*{conjecture}{Weak Palis Conjecture}
\theoremstyle{main}
\newtheorem*{main}{Main Theorem}
\theoremstyle{claim}
\newtheorem*{claim}{Claim}
\begin{document}

\title{A remark on the central model method for the weak Palis conjecture of higher dimensional singular flows}
\author{Qianying Xiao and Yiwei Zhang}
\maketitle
{\footnotesize
 \centerline{School of Mathematical Sciences, Peking University}
   \centerline{Beijing, 100871, P. R. China}
   \centerline{xiaoqianying@math.pku.edu.cn}
   \medskip
   \centerline{School of Mathematics and Statistics}
   \centerline{Center for Mathematical Sciences}
   \centerline{Hubei Key Laboratory of Engineering Modeling and Scientific Computing}
   \centerline{HuaZhong University of Science and Technology}
   \centerline{1037 Luoyu Road, Wuhan, 430074, P. R. China}
   \centerline{yiweizhang@hust.edu.cn}
} 

\bigskip

\begin{abstract}
For a generic vector field robustly without horseshoes, and an aperiodic chain recurrent class with singularities whose saddle values have different signs, the extended rescaled Poincar\'e map is associated with a central model. We estimate such central model and show it must have chain recurrent central segments over the singularities. This obstructs the application of central model to create horseshoes, and indicates that, differing from $C^1$ diffeomorphisms, solo using central model method is insufficient as a strategy to prove weak Palis conjecture for higher dimensional ($\geq 4$) singular flows.
Our computation is actually based on simplified way of addressing blowup construction. As a byproduct, we are applicable to directly compute the extended rescaled Poincar\'e map upto second order derivatives, which we believe has its independent interests.
\end{abstract}
\section{Introduction}
One goal of modern differential dynamical system theory is to classify dynamical behaviours for most systems. Under this framework, Palis \cite{Pa00,Pa08} proposed several famous density conjectures in the late 20th century, and they attract great interests afterwards \cite{B,BDV,CP,Ts}. One of the density conjectures concerns two extremely different kinds of systems. Namely Morse-Smale systems which are simple in that they have robustly finite periodic orbits; systems displaying horseshoes which are chaotic because they have robustly infinite periodic orbits. To be more precise, the conjecture is stated as:
\begin{conjecture}
The collection of Morse-Smale systems and horseshoe systems is $C^r$ open and dense in certain space of dynamical systems with $r\geq 1$.
\end{conjecture}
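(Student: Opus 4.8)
The plan is to run, for flows, the contradiction scheme that settled the conjecture for $C^1$ diffeomorphisms. Assume $X$ is $C^1$-generic inside the open set of vector fields that are robustly without horseshoes, and suppose $X$ is not Morse--Smale. Genericity (Pugh's closing lemma, the connecting lemma for pseudo-orbits, the Bonatti--Crovisier description of chain recurrence) decomposes the chain recurrent set into chain recurrence classes, and the hypothesis of being robustly horseshoe-free --- which forbids producing transverse homoclinic orbits by $C^1$-small perturbations --- should force the existence of a non-trivial chain recurrence class $C$ that carries no hyperbolic periodic orbit of the relevant index; i.e.\ an aperiodic (or aperiodic-like) class. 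So the first reduction is: it suffices to rule such a class out.

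If $C$ contains no singularity, one replaces the tangent flow along the orbits of $C$ by the linear Poincar\'e flow, rescales it (Liao's sifting and the scaled linear Poincar\'e flow, in the spirit of Gan--Yang), and the problem reduces to the non-singular situation, where the diffeomorphism argument transfers with only cosmetic changes. The genuinely new case is when $C$ contains singularities $\sigma$. There the linear Poincar\'e flow degenerates, so one blows $\sigma$ up --- projectivize the normal bundle to the flow direction and rescale --- to obtain the \emph{extended rescaled Poincar\'e map}, attaches to it a \emph{central model} $(\hat K,\hat f)$ in the sense of Crovisier--Pujals, and invokes the central model dichotomy: \emph{either} there is a chain-recurrent central segment, \emph{or} one can $C^1$-perturb inside the class to create a periodic orbit with a weak central eigenvalue, unfold it, and manufacture a horseshoe --- contradicting robust horseshoe-freeness. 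So the whole scheme closes \emph{provided} chain-recurrent central segments sitting over the singularities can be excluded.

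I expect that this last exclusion step is exactly where the program stalls in dimension $\geq 4$, and isolating it is the point of the paper. For an aperiodic class carrying singularities whose saddle values have different signs --- the configuration forced on us once horseshoes are excluded --- the blown-up dynamics over each $\sigma$ should turn out to be too neutral to be perturbed into a horseshoe. Making this quantitative requires an honest computation of the extended rescaled Poincar\'e map near $\sigma$ up to second-order derivatives, which is what the simplified blow-up construction developed in the body of the paper is for; the resulting estimate shows that the central model necessarily has chain-recurrent central segments over the singularities, so the ``perturb-to-build-a-horseshoe'' branch of the dichotomy is simply unavailable. The honest conclusion of the plan is therefore: the reduction to aperiodic classes and the non-singular sub-case are in good shape, but the singular sub-case demands a genuinely new mechanism beyond central models --- the obstruction being not a missing technical lemma but a structural rigidity of the rescaled dynamics over singularities that the rest of the paper makes precise.
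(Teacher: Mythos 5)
The statement you were asked about is the Weak Palis Conjecture itself, which the paper states only as background: it is an open conjecture, the paper contains no proof of it, and in fact the paper's Main Theorem is a \emph{negative} result showing that the most natural proof strategy (the central model method) cannot close the singular case in dimension $\geq 4$. So no proof proposal could be ``correct'' here, and to your credit you do not claim yours is: your final paragraph explicitly concedes that the singular sub-case demands a new mechanism. As a diagnosis rather than a proof, your sketch is accurate and matches the paper's architecture: the reduction to generic aperiodic chain recurrence classes, the non-singular case handled by the rescaled linear Poincar\'e flow as in the diffeomorphism/nonsingular-flow proofs, the blow-up and extended rescaled Poincar\'e map over singularities, the attached central model, and the obstruction located exactly at the singularities with saddle values of different signs --- this is precisely where the paper's Main Theorem says chain recurrent central segments are forced to appear over $\ell^{-1}(G_\sigma)$.

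One correction to how you phrase the dichotomy, because it matters for understanding why the method fails. In Crovisier's scheme the \emph{chain recurrent central segment} branch is the one that normally gives birth to horseshoes (via homoclinic intersections), while the other branch gives arbitrarily small trapping/repelling neighborhoods of the base; you have attributed the horseshoe creation to the wrong horn and then concluded that the scheme closes ``provided chain-recurrent central segments over the singularities can be excluded.'' The paper's point is sharper: \emph{both} horns fail. The Main Theorem produces a chain recurrent central segment over a singularity, which rules out the trapping-neighborhood alternative; and that segment does not create a horseshoe because, once the vanishing flow speed at $\sigma$ is reinserted via $\gamma=\exp_x(\Vert X(x)\Vert\hat\gamma)$, the segment collapses and does not increase the dimension of the chain class along the center (while Proposition \ref{n} excludes chain recurrent central segments over regular orbits, where a horseshoe could have been built). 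So the obstruction is not that one cannot \emph{exclude} central segments over singularities --- it is that their forced \emph{existence} is dynamically sterile. With that adjustment your assessment of where the program stalls coincides with the paper's conclusion.
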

Here the systems can be interpreted continuous or discrete. There are a number of attempts to prove this conjecture. For $C^1$ diffeomorphisms and $C^1$ nonsingular flow, the conjecture is proved positively in \cite{BGW07,Cr10,PS00} and\cite{AH,Xiao} respectively. While the progress of the singular flow case is comparatively slow. The core problem is to eliminate generic singular aperiodic chain recurrent classes. The presence of singularities adds huge difficulties, for example, the matching between the hyperbolic splittings of singularities and those of of nearby periodic points. Until recently, Gan-Yang \cite{GY} eventually prove the conjecture holds for three dimensional $C^1$ singular flows. Besides the idea of generalized linear Poincar\'e flow introduced in \cite{LGW}, the dimension restriction is crucial in their proof. In fact, the chain recurrent classes under their consideration must be Lyapunov stable(with respect to the flow or its inverse) so that they are able to construct singular return map to deduce a contradiction. While as the dimension increases, the discussions are becoming much more complicated (See for example \cite{Zheng} and the references therein for detailed arguments). In fact, the difficulties lie in ruling out generic aperiodic singular chain recurrent classes that are not Lyapunov stable. For example, there might exist aperiodic chain recurrent classes which are partially hyperbolic with one dimensional center(with respect to linear Poincar\'e flow) and therefore are not singular hyperbolic. The motivation of this paper is to eliminate these singular aperiodic chain recurrent classes.

To this end, a plausible machinery is by the central model method. This method was firstly introduced by Crovisier \cite{Cr10}, and has successfully dealt with the neutral one dimensional center to create horseshoes in proving weak Palis conjecture for $C^1$ diffeomorphisms. To be more precise, a central model is a pair $ (\hat{K},\hat{f}) $, where $ \hat{K} $ is a compact metric space and $ \hat{f} $ is a continuous map from $\hat{K}\times[0,1]  $ to $ \hat{K}\times[0,+\infty) $ such that:
\begin{itemize}
\item $ \hat{f}(\hat{K}\times\{0\})= \hat{K}\times\{0\}$, and $ \hat{f} $ is a local homeomorphism in a small neighborhood of $ \hat{K}\times\{0\} $;
\item $ \hat{f} $ is a skew-product: there exist two maps $ \hat{f}_1:\hat{K}\rightarrow\hat{K} $ and $ \hat{f}_2:\hat{K}\times[0,1]\rightarrow[0,+\infty) $ such that for any $ (x,t)\in \hat{K}\times[0,1]$, one has $\hat{f}(x,t)=(\hat{f}_1(x),\hat{f}_2(x,t))$.
\end{itemize}

Suppose the base $\hat{K}\times \{0\} $ is chain transitive. For $\hat{x}\in \hat{K}$ and $0<a<1$, the segment $\{\hat{x}\}\times [0,a] $ is called a \emph{chain recurrent central segment} if it is in the same chain recurrent class as $\hat{K}\times \{0\} $. The birth of horseshoes by central model is based on a dichotomy with the flavor of Conley theory. Namely, either the base is a chain recurrent class and therefore admits arbitrarily small attracting/repelling neighborhoods, or there exists a chain recurrent central segment.

Differing from the nonsingular flows, one can not apply central model directly to the Poincar\'e maps, because the sizes of the domains of the Poincar\'e maps tend to zero nearby the singularities. Instead, we are inspired by the ideas of Gan-Yang \cite{GY} and consider rescaled Poincar\'e maps. In fact, the idea of rescaling by the flow speed dates back to Liao \cite{Liao74,Liao89,Liaobk}. Let us recall the definition quickly. Let $X$ be a $C^1$ vector field on a compact manifold $M$. The flow of $X$ is denoted by $\phi_t$. Given a regular point $x$, $\langle X(x)\rangle^\perp$ is denoted by $\mathcal{N}_x$. For $T>0$ and $0<r\ll 1$, let us denote $\mathcal{N}_x(r)=\{v\in \mathcal{N}_x:\Vert v\Vert <r \}$. The rescaled Poincar\'e map $\mathcal{P}^*_{T,x} :\mathcal{N}_x(r)\rightarrow \mathcal{N}_{\phi_T(x)}$ is defined as:
\begin{align*}
\mathcal{P}^*_{T,x}(v)= \frac{\exp^{-1}_{\phi_T(x)}\circ P_{T,x}\circ \exp_x(\Vert X(x)v\Vert)}{\Vert X(\phi_T(x))\Vert}
\end{align*}
here $P_{T,x}$ is the Poincar\'e map. By blowing up the singularities, the rescaled Poincar\'e maps are uniformly continuous and therefore well-defined on domains with uniformly bounded below sizes. One can refer \cite{CY,GY} for the construction of extended rescaled Poincar\'e map $P^*_T $. For the singular aperiodic chain recurrent classes, we show the extended rescaled Poincar\'e maps are associated with central models. Meanwhile, there must be chain recurrent central segments over singularities.

\medskip

Let us state our result more mathematically. Suppose $X$ is a generic $C^1$ vector field robustly without horseshoes, $\mathrm{Sing}(X)$ is the collection of singularities of $X$. For $\sigma\in \mathrm{Sing}(X)$, the chain recurrent class and saddle value of $\sigma$ are denoted by $C(\sigma)$ and $\mathrm{sv}(\sigma)$ respectively. Let us define:
\begin{align*}
G_\sigma=\{L\in PT_\sigma M:&\exists X_n\rightarrow X~in~C^1,x_n\in \mathrm{Per}(X_n)\\
&  such~that~\mathcal{O}(x_n)\hookrightarrow C(\sigma),\langle\exp^{-1}_{\sigma}(x_n) \rangle\rightarrow L\} ,
\end{align*}
and $ K_\sigma=G_\sigma\cup( C(\sigma)\setminus \mathrm{Sing}(X))$. Suppose there exists $\rho\in C(\sigma)\cap \mathrm{Sing}(X)$ such that $\mathrm{sv}(\sigma)\mathrm{sv}(\rho)<0$. Then the extended rescaled Poincar\'e map $P^*_1$ over $ K_\sigma$ is partially hyperbolic with one dimensional center. In the same way as \cite[Proposition 4.6]{Xiao}, there exist a finite cover $\ell:\hat{K}_\sigma\rightarrow K_\sigma$ and a central model $(\hat{K}_\sigma,\hat{f} ) $ to depict the dynamics of the extended rescaled Poincar\'e map restricted to the one dimensional locally invariant central manifolds. With these conventions, our main results can be concluded as:
\begin{main}
In the central model $(\hat{K}_\sigma,\hat{f} ) $, there exists $\hat{x}\in \ell^{-1}(G_\sigma)$ and a chain recurrent central segment over $\hat{x}$.
\end{main}

In the statement of the main theorem, the central model does not have arbitrarily small trapping/repelling neighborhoods. On the other hand, the existence of chain recurrent central segments over singularities does not increase the dimension of the chain recurrent set along the center, because the zero flow speed needs to be taken into account. Therefore, differing from the nonsingular flow case, the chain recurrent central segment in this central model does not give birth to horseshoes. Thus neither aspects of the dichotomy about the central model create horseshoes. Therefore, the strategy of central model fails to eliminate the non-Lyapunov stable singular aperiodic chain recurrent classes. \emph{This implies solo using central model is insufficient to solve weak Palis conjecture in higher dimensional($\geq 4$) singular flows}.

The proof of the main theorem contains three steps. The first step is devoted to the construction of extended rescaled Poincar\'e maps(Proposition \ref{beta}). To this end, the blowup of singularity is introduced. Though there are available references about this topic, for instance \cite{CY,T}, but we are able to address the blowup construction in a more elementary way so that the construction of extended rescaled Poincar\'e map is simplified.
It is worth to remark that the novelty lies in the reduction to linear vector fields(the second step of the proof). To be more precise, we prove the extended rescaled Poincar\'e map over singularity equals the counterpart of the linearized vector field (Lemma \ref{l}). Furthermore, the linearized vector field is hyperbolic with the stable and unstable subspaces each containing a one dimensional weak direction, and one can choose a unit vector $u$ in the two dimensional center such that $\langle u \rangle\in G_\sigma$.
Finally, we show the machinery to associate a central model to the chain recurrent class. The estimation of the extended rescaled Poincar\'e maps at $\langle u \rangle$ implies the existence of chain recurrent central segment over $\ell^{-1}(\langle u \rangle) $ in the central model as we wanted.

In addition, as a byproduct of the blowup construction, we are applicable to compute the second order derivatives of extended rescaled Poincar\'e maps, which we believe has its independent interests. For example, for linear vector fields on two dimensional Euclidean space, we show that the extended rescaled Poincar\'e maps are generally nonlinear.

This work is organized as following: In section \ref{blowup}, we address the blowup construction. In section \ref{linear} we prove the main theorem, deducing that solo using central model is insufficient to solve weak Palis conjecture in higher dimensional($\geq 4$) singular flows. In the appendix, we compute the second order derivatives of the extended rescaled Poincar\'e maps of two dimensional linear vector fields.

\section{Blowup of singularities}\label{blowup}
In this section we readdress the blowup construction in a more elementary way compared to the available references, for instance \cite{CY,T}. Based on this tool, the construction of extended rescaled Poincar\'e maps in proving the main theorem is possibly simplified. Meanwhile, as a byproduct, we are able to compute the second order derivatives of the extended rescaled Poincar\'e map. This result is new and interesting as far as we are concerned so we put it in the appendix.
\subsection{Local: polar coordinate transformation}\label{local}
In this subsection we interpret the local construction of blowup of singularity as the polar coordinate transformation.

Suppose $n\in\mathbb{N}$, $n\geq 2$. Let $X$ be a a $C^1$ vector field on $\mathbb{R}^n$ with $X(0)=0$. The flow and tangent flow are denoted by $\phi_t$ and $\Phi_t$ respectively. Let us consider the polar coordinate transformation:
\begin{align*}
J:S^{n-1}\times[0,+\infty) & \rightarrow \mathbb{R}^n\\
(u,s) & \mapsto s\cdot u.
\end{align*}
\begin{lemma}\label{pullback}
There exists a continuous vector field $\tilde{X}$ on $S^{n-1}\times [0,+\infty)$ such that for any $(u,s)\in S^{n-1}\times [0,+\infty)$,
$$DJ\tilde{X}(u,s)=X(s\cdot u).$$
Meanwhile, the action of $\tilde{X}$ on $S^{n-1}\times \{0\}$ is equal to the normalization of $\Phi_t(0)$.
\end{lemma}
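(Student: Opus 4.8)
The plan is to define $\tilde X$ by the only possible formula on the region $S^{n-1}\times(0,+\infty)$, where $J$ is a diffeomorphism onto $\mathbb{R}^n\setminus\{0\}$, and then to show that this formula extends continuously across the ``exceptional sphere'' $S^{n-1}\times\{0\}$ precisely because $X(0)=0$ and $X$ is $C^1$.

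First I would compute $DJ$. Writing a tangent vector to $S^{n-1}\times[0,+\infty)$ at $(u,s)$ as a pair $(w,a)$ with $w\in T_uS^{n-1}$ (so $\langle w,u\rangle=0$) and $a\in\mathbb{R}$, one gets
\begin{align*}
DJ_{(u,s)}(w,a)=sw+au.
\end{align*}
For $s>0$ this is a linear isomorphism $T_uS^{n-1}\oplus\mathbb{R}\to\mathbb{R}^n$: if $sw+au=0$, then taking the inner product with $u$ gives $a=0$ (as $\|u\|=1$ and $w\perp u$), hence $w=0$, and a dimension count finishes it. So $J$ restricts to a diffeomorphism $S^{n-1}\times(0,+\infty)\to\mathbb{R}^n\setminus\{0\}$, and on this set the only vector field with the required property is $\tilde X=(DJ)^{-1}(X\circ J)$. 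Decomposing $X(su)$ along the orthogonal splitting $\mathbb{R}^n=\mathbb{R}u\oplus T_uS^{n-1}$ yields the explicit expression
\begin{align*}
\tilde X(u,s)=\Big(\tfrac1s\big(X(su)-\langle X(su),u\rangle u\big),\ \langle X(su),u\rangle\Big),\qquad s>0.
\end{align*}

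The crux is the continuous extension to $s=0$. Since $X(0)=0$, the fundamental theorem of calculus gives $X(su)=s\int_0^1 DX(\tau su)\,u\,d\tau$ for all $s\ge 0$, so putting $g(u,s)=\int_0^1 DX(\tau su)\,u\,d\tau$ we have $g(u,s)=X(su)/s$ for $s>0$, $g(u,0)=DX(0)u$, and $g$ is continuous on $S^{n-1}\times[0,+\infty)$ because $DX$ is continuous and $S^{n-1}$ is compact (so the integrand is jointly continuous and locally uniformly controlled). Rewriting the formula above as
\begin{align*}
\tilde X(u,s)=\big(g(u,s)-\langle g(u,s),u\rangle u,\ s\langle g(u,s),u\rangle\big)
\end{align*}
now displays $\tilde X$ as a continuous vector field on all of $S^{n-1}\times[0,+\infty)$; its first component is genuinely tangent to $S^{n-1}$ at $u$, being orthogonal to $u$. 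The identity $DJ_{(u,s)}\tilde X(u,s)=s\,g(u,s)=X(su)$ holds for $s>0$ by construction and trivially at $s=0$ (both sides vanish). Finally, restricting to $S^{n-1}\times\{0\}$ gives $\tilde X(u,0)=\big(DX(0)u-\langle DX(0)u,u\rangle u,\ 0\big)$, which is exactly the spherical projection of the linear field $u\mapsto DX(0)u$, i.e.\ the normalization of the tangent flow $\Phi_t(0)=e^{t\,DX(0)}$ at the singularity.

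Everything except the extension across $s=0$ is bookkeeping, so I expect the main obstacle to be simply a matter of care there: one must check that $X(su)/s\to DX(0)u$ is attained continuously and uniformly in $u$, which is why the integral representation of $g$ is the clean device and why both hypotheses ``$X\in C^1$'' and ``$X(0)=0$'' enter essentially. If $X$ were only continuous the construction would fail, and if $X$ were $C^k$ the same argument would give $\tilde X\in C^{k-1}$.
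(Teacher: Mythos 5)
Your construction of $\tilde X$ is correct and coincides with the paper's: the same computation of $DJ_{(u,s)}$, the same orthogonal decomposition of $X(su)$ along $\mathbb{R}u\oplus T_uS^{n-1}$, and the same integral representation $X(su)/s=\int_0^1 DX(\tau su)u\,d\tau$ to get continuity across $s=0$ (this is exactly the paper's equation (2.2)). Where you genuinely diverge is the second assertion of the lemma. You handle it by identifying $\tilde X(u,0)=DX(0)u-\langle DX(0)u,u\rangle u$ as \emph{the generator} of the normalized tangent flow; to close that step you should record the one-line verification that $u_t=\Phi_t(0)u_0/\Vert\Phi_t(0)u_0\Vert$ satisfies $\dot u_t=DX(0)u_t-\langle DX(0)u_t,u_t\rangle u_t$ (uniqueness on the zero section is then free because $\tilde X(\cdot,0)$ is polynomial in $u$, hence smooth). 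The paper instead pushes the actual flow $\phi_t$ forward through $J$, writes $\tilde\phi_t(u,s)=(\phi_t(su)/\Vert\phi_t(su)\Vert,\Vert\phi_t(su)\Vert)$ for $s>0$, and takes the limit $(u,s)\to(u_0,0)$ to get $\tilde\phi_t(u,0)=(\Phi_t(0)u/\Vert\Phi_t(0)u\Vert,0)$. That heavier route buys something your argument does not: since $\tilde X$ is merely continuous (only $C^0$ in $s$), Peano existence gives integral curves but not uniqueness, so the statement that $\tilde X$ ``generates a continuous flow'' on all of $S^{n-1}\times[0,+\infty)$ --- which Remark \ref{hat} and Item 3 of the global blowup lemma rely on --- requires exhibiting the flow explicitly as the paper does, rather than inferring it from the vector field. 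For the lemma as literally stated your approach suffices (modulo the ODE check above); for its use downstream, the paper's flow-level construction is the part you would still need to add.
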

\begin{proof}
Let us first compute the tangent map $DJ_{(u,s)}:T_uS^{n-1}\times\mathbb{R}\rightarrow\mathbb{R}^n $. Suppose $\{e_1,\cdots,e_{n-1} \}$ is a base of $T_uS^{n-1}$, and $e_n$ is the unit vector of $\mathbb{R}$. This implies $\{e_1,\cdots,e_n \}$ and $\{e_1,\cdots,e_{n-1},u \}$ are basis of $T_uS^{n-1}\times\mathbb{R}$ and $\mathbb{R}^n$ respectively. Under these two basis, the following holds:
\begin{equation*}
DJ_{(u,s)}=\mathrm{diag}\{s,\cdots,s,1\}.
\end{equation*}
For $s\neq 0$, the vector $X(s\cdot u)$ has an orthogonal decomposition:
\begin{equation*}
X(s\cdot u)=\big(X(s\cdot u)-\big\langle X(s\cdot u),u  \big\rangle u\big)+\big\langle X(s\cdot u),u  \big\rangle u.
\end{equation*}
There exists a vector $\tilde{X}(u,s)$ on $T_uS^{n-1}\times\mathbb{R}$ such that $DJ\tilde{X}(u,s)=X(s\cdot u)$:
\begin{align}
\tilde{X}(u,s) & =\frac{X(s\cdot u)-\big\langle X(s\cdot u),u  \big\rangle u}{s}+\big\langle X(s\cdot u),u  \big\rangle e_n\\
& =\int^1_0DX(t\cdot s \cdot u)u-\big\langle DX(t\cdot s\cdot u)u,u  \big\rangle u dt +\big\langle X(s\cdot u),u  \big\rangle e_n.
\end{align}
Let us define:
\begin{equation}
\tilde{X}(u,0)=DX(0)u-\big\langle DX(0)u,u  \big\rangle u.
\end{equation}
By (2.2), the vector field $\tilde{X}$ is continuous on $S^{n-1}\times [0,+\infty)$.

Let us consider the flow of $\tilde{X}$. For $s\neq 0$,
\begin{equation}
\phi_t(s\cdot u)=\int^1_0\Phi_t(w\cdot s\cdot u)s\cdot u d w.
\end{equation}
Suppose $\Vert \phi_t(s\cdot u)\Vert=s_t,~\frac{\phi_t(s\cdot u)}{\Vert \phi_t(s\cdot u)\Vert}=u_t $. According to (2.4), the following holds:
\begin{equation}
\frac{s_t}{s}\cdot u_t=\int ^1_0\Phi_t(w\cdot s \cdot u)udw.
\end{equation}
By taking $(u,s)\rightarrow (u_0,0) $, the RHS of (2.5) tends to $\Phi_t(0)u_0$. Therefore,
\begin{align}
u_t  \rightarrow \frac{\Phi_t(0)u_0}{\Vert \Phi_t(0)u_0 \Vert},~
\frac{s_t}{s}  \rightarrow \Vert \Phi_t(0)u_0 \Vert.
\end{align}
Let us define:
\begin{align}
&\tilde{\phi}_t(u,s)=(u_t,s_t)=(\frac{\phi_t(s\cdot u)}{\Vert \phi_t(s\cdot u)\Vert},\Vert \phi_t(s\cdot u)\Vert),~s\neq 0,\\
&\tilde{\phi}_t(u,0)= (\frac{\Phi_t(0)u}{\Vert \Phi_t(0)u \Vert},0 ).
\end{align}
From (2.6), (2.7) and (2.8), one can see $ \tilde{\phi}_t$ is a continuous flow on $S^{n-1}\times [0,+\infty)$ that is tangent to  $ \tilde{X}$. Meanwhile, (2.8) indicates that on $S^{n-1}\times \{0\}$, the flow $ \tilde{\phi}_t$ is the normalization of $\Phi_t(0)$. The proof of the lemma is finished.
\end{proof}
\begin{remark}\label{2}
By (2.3), the vector $ \tilde{X}(u,0)$ is the orthogonal projection of $DX(0)u$ onto $\langle u\rangle^\perp$, and therefore the unit eigenvectors of $DX(0)$ are singularities of $ \tilde{X}$.
\end{remark}
In order to be boundaryless, let us define an equivalence relation $\sim$ on $S^{n-1}\times [0,+\infty)$ as following:
\begin{equation*}
(u,s)\sim (u,s),~(u,0)\sim (-u,0).
\end{equation*}
The quotient space $S^{n-1}\times [0,+\infty)/\sim$ is a $C^\infty$ boundaryless manifold. Meanwhile, the map $J$ induces a map from $S^{n-1}\times [0,+\infty)/\sim$ to $\mathbb{R}^n$. Let us denote it by $\hat{J}$.
\begin{remark}\label{hat}
Since $\tilde{X}(u,0)=- \tilde{X}(-u,0)$, the vector field $\tilde{X}$ induces a continuous vector field $\hat{X}$ on $S^{n-1}\times [0,+\infty)/\sim$. According to Lemma \ref{pullback}, the quotient vector field $\hat{X}$ generates a continuous flow on $S^{n-1}\times [0,+\infty)/\sim$.
\end{remark}
\subsection{Global: compactification of manifold minus singularities}
Given a $C^1$ vector field $X$ with non-degenerate singularities on the manifold $M$, the global construction of blowup of singularities is a way to compactify the manifold minus singularities.
\begin{lemma}
There exist a compact boundaryless manifold $\hat{M}$, a $C^\infty$ surjective map $\Pi:\hat{M}\rightarrow M$ and a continuous vector field $\hat{X}$ on $\hat{M}$ such that
\begin{enumerate}
\item  $\Pi|\hat{M}\setminus \Pi^{-1}(\mathrm{Sing}(X))$ is a diffeomorphism onto $ M\setminus \mathrm{Sing}(X)$, $\Pi^{-1}(M\setminus \mathrm{Sing}(X) ) $ is dense in $\hat{M}$;
\item for any $\sigma\in \mathrm{Sing}(X)$, there exists a neighborhood $U$ such that $\Pi: \Pi^{-1}(U)\rightarrow U$ is equal to $\hat{J}$ modulo coordinate charts.
\item  $D\Pi(\hat{X})=X $, $\hat{X}$ generates a continuous flow $\hat{\phi}_t$ on $\hat{M}$.
\end{enumerate}
\end{lemma}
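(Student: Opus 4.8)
The plan is to globalize the construction of Subsection~\ref{local} by cutting out disjoint charts around the (finitely many, since they are non-degenerate and $M$ is compact) singularities $\mathrm{Sing}(X)=\{\sigma_1,\dots,\sigma_k\}$ and gluing in a copy of the local model at each of them. First I would fix pairwise disjoint $C^\infty$ charts $\varphi_i\colon U_i\to B(0,2)\subset\mathbb R^n$ with $\varphi_i(\sigma_i)=0$, set $E_i:=S^{n-1}\times[0,2)/\!\sim$ (the local model of Subsection~\ref{local}), and define
\[
\hat M:=\Big((M\setminus\mathrm{Sing}(X))\ \sqcup\ \bigsqcup_{i=1}^{k}E_i\Big)\Big/\approx,
\]
where $\approx$ identifies $[(u,s)]\in E_i$ having $s>0$ with $\varphi_i^{-1}(s\,u)\in M\setminus\mathrm{Sing}(X)$; then $\Pi$ is defined to be the identity on $M\setminus\mathrm{Sing}(X)$ and $\varphi_i^{-1}\circ\hat J$ on each $E_i$. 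This produces a set with a surjection $\Pi\colon\hat M\to M$ whose only non-injective fibres are the exceptional divisors $D_i:=\Pi^{-1}(\sigma_i)=S^{n-1}\times\{0\}/\!\sim\ \cong\mathbb{RP}^{n-1}$.

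Second, I would equip $\hat M$ with a $C^\infty$ structure and read off properties (1) and (2). Off $\bigcup_i D_i$ one uses the charts inherited from $M$ through $\Pi$. Near $D_i$ one uses the observation already implicit in Subsection~\ref{local}: gluing two copies of (a coordinate neighbourhood of $S^{n-1}$)$\times[0,\varepsilon)$ along $\{s=0\}$ by the antipodal map produces, after a local parametrisation $y\mapsto v(y)$ of $S^{n-1}$, a chart $(y,\tau)\in(\text{open}\subset\mathbb R^{n-1})\times(-\varepsilon,\varepsilon)$ in which $\varphi_i\circ\Pi$ becomes the $C^\infty$ map $(y,\tau)\mapsto\tau\,v(y)$; its overlaps with the $M$-charts are diffeomorphisms on $\{\tau\neq0\}$ because $v(y)$ and $\partial_{y_1}v(y),\dots,\partial_{y_{n-1}}v(y)$ are linearly independent. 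Hence $\hat M$ is a $C^\infty$ manifold, boundaryless (each divisor chart is modelled on $\mathbb R^n$), and $\Pi$ is $C^\infty$. Compactness follows by writing $\hat M=\Pi^{-1}(K)\cup\bigcup_i\Pi^{-1}(\overline{\varphi_i^{-1}(B(0,3/2))})$ with $K:=M\setminus\bigcup_i\varphi_i^{-1}(B(0,1))$ compact and each $\Pi^{-1}(\overline{\varphi_i^{-1}(B(0,3/2))})\cong S^{n-1}\times[0,3/2]/\!\sim$ compact. Property (1): $\Pi$ restricted to $\hat M\setminus\bigcup_i D_i$ is the identity of $M\setminus\mathrm{Sing}(X)$, a diffeomorphism, and $\bigcup_i D_i$ is a finite union of codimension-one closed submanifolds, so its complement is dense (it is $\{\tau\neq0\}$ in the divisor charts). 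Property (2): take $U=U_i$; then $\Pi\colon\Pi^{-1}(U_i)\to U_i$ equals $\hat J$ read through $\varphi_i$, by construction.

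Third, I would transport the vector field and its flow. Put $\hat X:=X$ on $M\setminus\mathrm{Sing}(X)$ (via $\Pi$), and on each $\Pi^{-1}(U_i)$ let $\hat X$ be the quotient vector field of Remark~\ref{hat} associated to $\varphi_{i*}X$ by Lemma~\ref{pullback}; on the overlap $\{s>0\}$ the two definitions agree because there $D\hat J$ is invertible and $\hat X=(D\hat J)^{-1}(X\circ\hat J)$. By Lemma~\ref{pullback} and Remark~\ref{hat}, $\hat X$ is continuous and $D\Pi(\hat X)=X$ on each $\Pi^{-1}(U_i)$, hence on all of $\hat M$. For the flow, set $\hat\phi_t:=\Pi^{-1}\circ\phi_t\circ\Pi$ on $\hat M\setminus\bigcup_i D_i$ (legitimate since $\mathrm{Sing}(X)$ is $\phi_t$-invariant and $\Pi$ is a diffeomorphism there), and on each $\Pi^{-1}(U_i)$ take the descent of the flow $\tilde\phi_t$ of (2.7)--(2.8); these coincide on $\{s>0\}$ because $\hat J$ carries $(\phi_t(s\,u)/\|\phi_t(s\,u)\|,\|\phi_t(s\,u)\|)$ to $\phi_t(s\,u)$. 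The resulting $\hat\phi\colon\mathbb R\times\hat M\to\hat M$ is continuous by (2.6)--(2.8), satisfies the group law on the dense set $\hat M\setminus\bigcup_i D_i$ and hence everywhere by continuity, and is complete for all $t\in\mathbb R$ since $\hat M$ is compact; this is the continuous flow $\hat\phi_t$ of (3).

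The main obstacle is the second step: producing an honest $C^\infty$ atlas of $\hat M$ near the exceptional divisors and checking its compatibility with the ambient atlas of $M\setminus\mathrm{Sing}(X)$ --- this is exactly where the elementary polar-coordinate viewpoint of Subsection~\ref{local} pays off, by making the charts $(y,\tau)\mapsto\tau\,v(y)$ explicit --- together with verifying that the locally defined $\tilde X$ and $\tilde\phi_t$ glue across $\{s=0\}$ into globally continuous objects. Everything else (compactness, density of $\Pi^{-1}(M\setminus\mathrm{Sing}(X))$, and the identities $D\Pi(\hat X)=X$, $\hat\phi_{t+s}=\hat\phi_t\circ\hat\phi_s$) then follows formally from Subsection~\ref{local} and compactness of $\hat M$. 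Non-degeneracy of the singularities is used only to guarantee that $\mathrm{Sing}(X)$ is finite, so that finitely many disjoint charts suffice.
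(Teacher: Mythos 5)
Your proposal is correct and follows essentially the same route as the paper: glue the local polar-coordinate model $S^{n-1}\times[0,s_i)/\!\sim$ of Subsection \ref{local} into $M\setminus\mathrm{Sing}(X)$ at each of the finitely many singularities, verify the $C^\infty$ compatibility of the divisor charts with the ambient atlas, and transport $X$ and its flow via Lemma \ref{pullback} and Remark \ref{hat}. You are somewhat more explicit than the paper about the atlas near the exceptional divisors (the charts $(y,\tau)\mapsto\tau\,v(y)$) and about why the locally defined flows glue, but the decomposition and the key ingredients are the same.
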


\begin{proof}
Suppose $\mathrm{Sing}(X)=\{\sigma_1,\cdots,\sigma_k \}$. For $i=1,\cdots, k $, let $s_i>0$ be small enough such that
\begin{itemize}
\item $\exp_{\sigma_i}:B(0,s_i)\subset T_{\sigma_i }M \rightarrow U_i $ is a diffeomorphism;
\item $U_i=\exp_{\sigma_i}( B(0,s_i)),i=1,\cdots,k$ are pairwise disjoint.
\end{itemize}
Let us define:
$\hat{M}=(M\setminus \mathrm{Sing}(X))\cup PT_{\sigma_i}M\cup\cdots\cup PT_{\sigma_k}M$.
The space $\hat{M}$ is endowed a topology such that:
\begin{itemize}
\item the map $j:M\setminus \mathrm{Sing}(X)(\subset M)\rightarrow M\setminus \mathrm{Sing}(X)(\subset\hat{M})$ with $j(x)=x$ is a homeomorphism;
\item for $i=1,\cdots,k$, the map $I_i:PT_{\sigma_i}M\rightarrow PT_{\sigma_i}M\subset \hat{M}$ such that $I_i(\langle u\rangle)=\langle DX(\sigma_i)u\rangle $ is an embedding.
\end{itemize}
Let us define a map $\Pi:\hat{M}\rightarrow M$ such that
\begin{align*}
\pi(x)&=x,~x\in M\setminus\mathrm{Sing} (X),\\
\pi(\langle u\rangle)&=\sigma_i,~u\in T^1_{\sigma_i }M,~i=1,\cdots,k.
\end{align*}
The nondegeneracy of $\sigma_i$ implies the neighborhood $V_i=U_i\setminus  \{\sigma_i\}\cup PT_{\sigma_i }M$ of $PT_{\sigma_i }M $ is homeomorphic to $T^1_{\sigma_i }M\times [0,s_i)\diagup (v,0)\sim (-v,0)$ by the following map:
\begin{align*}
\varphi_i:T^1_{\sigma_i }M\times [0,s_i)\diagup \sim & \rightarrow V_i\\
(u,s) & \mapsto \exp_{\sigma_i }(s\cdot u),s>0,\\
(u,0) & \mapsto \langle u\rangle.
\end{align*}
Therefore $\Pi:V_i\rightarrow U_i $  is equal to $\exp_{\sigma_i}\circ \hat{J}\circ\varphi_i^{-1} $ for $i=1,\cdots,k$, and Item 2 of this lemma holds.

On the other hand, the coordinate charts of $M\setminus \mathrm{Sing}(X)$ are $C^\infty$ consistent with $\{ \varphi_i\}$. Hence $\hat{M}$ is a $C^\infty$ compact manifold under these coordinate charts and $\phi_i,i=1,\cdots,k$. Item 1 is deduced directly from the choice of the topology of $\hat{M} $. Item 3 follows from Remark \ref{hat} and Item 2. The proof of this lemma is finished.

\end{proof}

Suppose $\xi:TM\rightarrow M$ is the tangent bundle. Let $\Pi^\ast(\xi):\Pi^\ast(TM)\rightarrow \tilde{M}$ be the pullback of $\xi$ by $\Pi$:
\begin{displaymath}
\xymatrix{
		\Pi^\ast(TM)\ar[d]^{\Pi^\ast(\xi)} \ar[r]^{}    & TM \ar[d]^{\xi}\\
		\hat{M}   \ar[r]^{\Pi}                                        &M}
\end{displaymath}
By the choice of the topology on $\hat{M}$, $\Pi^\ast(\xi)$ admits a continuous line field $\mathcal{L}$ such that
\begin{align}
\mathcal{L}_x=\langle X(x)\rangle~for~x\in M\setminus \mathrm{Sing}(X),~\mathcal{L}_{\langle u\rangle }=\langle DX(\sigma_i)u\rangle.
\end{align}
Let us recall the definition of the normal bundle $\mathcal{N}$ of $X$:
\begin{equation*}
\mathcal{N}=\{v\in T_xM:x\in M\setminus \mathrm{Sing}(X),\langle v,X(x) \rangle=0\}.
\end{equation*}
Let $\hat{N}$ be the orthogonal complement of $\mathcal{L}$. Then the restriction of $\hat{N}$ to $M\setminus \mathrm{Sing}(X) $, namely $\hat{N}_{M\setminus \mathrm{Sing}(X)}$, is isomorphic to $\mathcal{N}$ by (2.9).
\begin{remark}
The definition of $\mathcal{L}$ in (2.9) implies the Nash blowup of singularities in \cite{LGW} is homeomorphic to our blowup construction. With $\mathcal{L}$ as reference lines, the generalized linear poincar\'e flow introduced in \cite{LGW} is well-defined in $\hat{N}$ as following:
\begin{equation}
\psi_t:\hat{N}\rightarrow\hat{N},~\psi_t(v)=\pi(\Phi_t(u) ),
\end{equation}
with $\pi$ the orthogonal projection from $\Pi^\ast(TM)$ to $\hat{N}$.
\end{remark}
\section{Proof of the main theorem}\label{linear}
The proof of the main theorem contains three steps. The first step is the construction of extended rescaled Poincar\'e map. It is not new, but simple and important for the construction of central model. Second, we show the reduction to linear vector fields. Third, we show the existence of central model, and in this central model there must be chain recurrent central segments over singularities through estimations of extended rescaled Poincar\'e map.
\subsection{Extended rescaled Poincar\'e map}
It is proved that the rescaled Poincar\'e map are defined on domains with uniformly bounded below sizes and can be compactified in \cite{CY,GY,Wx}. To be more precise,
\begin{proposition} \label{beta}
For any $T> 0$, there exists $\beta>0$ such that the rescaled Poincar\'e map $ \mathcal{P}^*_T$ is well-defined on the normal bundle $\mathcal{N}(\beta)$ and can be extended to a continuous map $P^*_T:\hat{N}(\beta)\rightarrow\hat{N}$.
\end{proposition}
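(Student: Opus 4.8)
Since Proposition \ref{beta} is already available in \cite{CY,GY,Wx}, the plan is to reprove it using only the elementary blowup of Section \ref{blowup}, which makes the role of the flow-speed rescaling transparent. First I would reduce to a local statement by compactness of $\hat{M}$: it suffices to find, for each $\hat{p}\in\hat{M}$, a neighborhood $V_{\hat p}$ and a radius $\beta_{\hat p}>0$ so that for every regular $x$ with $\Pi^{-1}(x)\in V_{\hat p}$ the map $\mathcal{P}^*_{T,x}$ is defined on $\mathcal{N}_x(\beta_{\hat p})$, so that the family $\{\mathcal{P}^*_{T,x}\}$ depends continuously on $\Pi^{-1}(x)\in V_{\hat p}\setminus\Pi^{-1}(\mathrm{Sing}(X))$, and so that it extends continuously across $V_{\hat p}\cap\Pi^{-1}(\mathrm{Sing}(X))$; a finite subcover and $\beta=\min_i\beta_{\hat p_i}$ then glue the local extensions to $P^*_T:\hat N(\beta)\to\hat N$, and since $\mathcal{N}(\beta)$ is dense in $\hat N(\beta)$ by Item 1 of the global blowup lemma, this is the unique continuous extension of $\mathcal{P}^*_T$. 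When $\Pi(\hat p)\notin\mathrm{Sing}(X)$ the statement is classical: $\Pi$ is a diffeomorphism there, $\|X\|$ is bounded below on a small compact neighborhood, the ordinary time-$T$ Poincar\'e map of the $C^1$ flow depends $C^1$-continuously on its basepoint on sections of size bounded below, and conjugating by $v\mapsto\exp_x(\|X(x)\|v)$ and dividing by $\|X(\phi_T(x))\|$ only rescales constants.

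The content is the case $\hat p\in PT_\sigma M$, and here the blowup does the work. In the polar chart $(u,s)\mapsto\exp_\sigma(su)$ a regular point near $\sigma$ is $x=\exp_\sigma(su)$ with $s>0$ small, and I would read everything through the zoomed coordinates $z\mapsto\exp_\sigma(sz)$ with $z$ near the unit vector $u$. In these coordinates the field becomes $z\mapsto\tfrac1s X(\exp_\sigma(sz))$, which by $X(\exp_\sigma(sz))=\int_0^1 DX(\exp_\sigma(tsz))(sz)\,dt$ is $DX(\sigma)z+R_s(z)$ with $\sup_{\|z\|\le2}\bigl(\|R_s(z)\|+\|DR_s(z)\|\bigr)\to0$ as $s\to0$, using only continuity of $DX$ at $\sigma$ — that is, \emph{rescaling by the flow speed near $\sigma$ turns $X$ uniformly into a $C^1$-small perturbation of the scale-invariant linear field $DX(\sigma)$}. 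Hence, by the standard continuous dependence of solutions and of Poincar\'e maps, the zoomed orbit $t\mapsto\tfrac1s\phi_t(\exp_\sigma(su))$ and its time-$T$ Poincar\'e map between the fixed $(n-1)$-dimensional fibers $\mathcal{N}_{\exp_\sigma(su)}\to\mathcal{N}_{\phi_T(\exp_\sigma(su))}$ (whose directions tend to $(DX(\sigma)u)^{\perp}$ and $(DX(\sigma)e^{TDX(\sigma)}u)^{\perp}$, cf.\ Lemma \ref{pullback}) converge, uniformly in $u\in S^{n-1}$, to the rescaled Poincar\'e map of the linear field $DX(\sigma)$ at the direction $u$; nondegeneracy of $\sigma$ (so that $\|DX(\sigma)u\|$ is bounded below) and invertibility of $e^{TDX(\sigma)}$ are what makes this uniform. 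The linear model is defined on $(DX(\sigma)u)^{\perp}(\beta)$ for one $\beta>0$ and varies continuously with $u$, so the same $\beta$ works for all $\mathcal{P}^*_{T,x}$ with $x$ near $\sigma$, with continuous dependence on $(u,s)$ and a continuous limit over $s=0$; this limit, matched via the blowup twist $\langle u\rangle\mapsto\langle DX(\sigma)u\rangle$ with the fiber $\hat N$ over the exceptional point, is the value of $P^*_T$ there (consistent with, but weaker than, the later Lemma \ref{l}).

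I expect the main obstacle to be exactly this uniformity in the singular case: one must show that the radius on which $\mathcal{P}^*_{T,x}$ is a well-controlled map stays bounded below as $x\to\sigma$, even though the honest section $\exp_x(\|X(x)\|\mathcal{N}_x(\beta))$ has size $\asymp\beta s\to0$ in $M$ and the ordinary Poincar\'e map $P_{T,x}$ may have arbitrarily large derivative when the orbit of $x$ lingers near $\sigma$. The resolution is that these two degenerations are cancelled precisely by the two rescaling factors $\|X(x)\|$ and $\|X(\phi_T(x))\|$, and the clean way to witness the cancellation is to pass to the zoomed coordinates above, where everything becomes a uniform $C^1$-small deformation of the linear model, so that the implicit-function-theorem and variational-equation estimates behind the Poincar\'e map apply with constants uniform over all small $s$ and all $u$. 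The one point to watch is that $X$ is merely $C^1$, so the quantity that must be controlled uniformly is the $C^1$-smallness of the rescaled nonlinearity $R_s$, i.e.\ $\sup_{\|z\|\le2}\|DX(\exp_\sigma(sz))-DX(\sigma)\|\to0$; this is provided by the modulus of continuity of $DX$ at $\sigma$, and is the reason the blowup of Section \ref{blowup}, which never differentiates $X$ twice, is enough.
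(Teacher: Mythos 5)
Your argument is correct in its analytic core but follows a genuinely different route from the paper's. The paper never passes to zoomed coordinates or to a perturbation of the linear model: it writes the candidate image $F(t,u,s,\tau,y)$ and the section condition $H(t,u,s,\tau,y)=\big\langle F, X(\phi_t(x))/\Vert X(\phi_t(x))\Vert\big\rangle=0$ explicitly (formulas (3.1)--(3.9)), extends $F$, $\partial F/\partial\tau$ and $\partial F/\partial y$ to $s=0$ by hand using the integral form of the remainder, checks their continuity, and then applies the implicit function theorem once to produce the hitting time $\tau(t,u,s,y)$ on domains of size uniformly bounded below; the extension is then $F(t,u,s,\tau(t,u,s,y),y)$ evaluated at $s=0$. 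Your reduction --- rescaling by $s$ so that the field becomes a uniformly $C^1$-small perturbation $DX(\sigma)z+R_s(z)$ of the linear model, then invoking continuous dependence of Poincar\'e maps --- is a softer argument that reaches the same uniform bounds and nicely anticipates Lemma \ref{l}. What you lose is the explicit expression for $F$ and $\tau$, which the paper reuses immediately afterwards: Lemma \ref{r} (identifying $DP^*_t$ with $\psi^*_t$), Lemma \ref{l} (independence of the nonlinear part $f$), and the Appendix all differentiate these formulas directly. Note also that the two rescalings do not literally coincide: you zoom by $s$ at both ends, while $\mathcal{P}^*_{T,x}$ rescales by $\Vert X(x)\Vert$ at the source and $\Vert X(\phi_T(x))\Vert$ at the target; nondegeneracy of $\sigma$ makes the conversion factors converge to $\Vert DX(\sigma)u\Vert$ and $\Vert DX(\sigma)e^{TDX(\sigma)}u\Vert$, as you indicate, but this bookkeeping should be written out.

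One step is genuinely missing. Your limit is taken along a fixed direction $u\in S^{n-1}$, but the exceptional fiber of the blowup is $PT_\sigma M$, so $\langle u\rangle=\langle -u\rangle$, and the limits obtained along $u$ and along $-u$ must be matched for $P^*_T$ to be well defined and continuous on $\hat{N}(\beta)$. This is not automatic: the limit map is in general not odd in $y$ (the Appendix shows its second derivative at $0$ is generically nonzero), so the two one-sided limits differ as maps of $(DX(\sigma)u)^\perp$ and only agree after the identification $(u,y)\sim(-u,-y)$. The paper handles this by verifying $\tau(t,u,0,y)=\tau(t,-u,0,-y)$ and $F\big(t,u,0,\tau(t,u,0,y),y\big)=-F\big(t,-u,0,\tau(t,-u,0,-y),-y\big)$, so that $\tilde{P}^*$ descends to the quotient. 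You should add the corresponding check in your framework; for the linear model it is the oddness of the flow, $\phi_t(-x)=-\phi_t(x)$, and it persists for $X$ itself because the defining equations are equivariant under $(u,y)\mapsto(-u,-y)$ at $s=0$.
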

The elementary way of addressing blowup construction can clarify the construction of extended rescaled Poincar\'e maps. Meanwhile, Proposition \ref{beta} is crucial for the construction of central model in proving the main theorem. So let us give a proof of this Proposition.
\begin{proof}[Proof of Proposition \ref{beta}]
Let us consider the neighborhood of a singularity and modulo the local coordinate transformations. Suppose $X$ is a $C^1$ vector field on $\mathbb{R}^d$ such that $X(0)=0$. For $x\neq 0,t\in \mathbb{R}$, $0<r\ll 1$ and $y\in N_x(r)$, let $\tau+t=\tau(t,x,y)+t$ be the first time for $y$ to reach $N_{\phi_t(x)}$. The rescaled Poincar\'e map satisfies:
\begin{align*}
\mathcal{P}^*_{t,x}(y)=&\frac{1}{\Vert X(\phi_t(x))\Vert}\mathcal{P}_{t,x}(\Vert X(x) \Vert y)\\
=&\frac{1}{\Vert X(\phi_t(x))\Vert}\exp^{-1}_{\phi_t(x) }\circ P_{t,x}\circ \exp_x(\Vert X(x) \Vert y )\\
=&\frac{1}{\Vert X(\phi_t(x))\Vert}\exp^{-1}_{\phi_t(x) }\circ \phi_{\tau+t}\circ\exp_x( \Vert X(x) \Vert y)\\
=&\frac{1}{\Vert X(\phi_t(x))\Vert}\big(\phi_{\tau+t}(x+\Vert X(x) \Vert y)-\phi_t(x) \big).
\end{align*}
For $(u,s)\in S^{d-1}\times (0,+\infty)$, $\tau\in \mathbb{R}$, $x=s\cdot u$ and $y\in N_x$, let us define:
\begin{equation*}
F(t,u,s,\tau,y ) =\frac{1}{\Vert X(\phi_t(x))\Vert}\big(\phi_{\tau+t}(x+\Vert X(x) \Vert y)-\phi_t(x) \big),
\end{equation*}
\begin{equation}
=\frac{\Vert X(x) \Vert}{\Vert X(\phi_t(x))\Vert}\int^1_0d\phi_{\tau+t}\big(s\cdot u+w\cdot\Vert X(x) \Vert y \big)ydw+\frac{\phi_{\tau+t}(x)-\phi_t(x)}{\Vert X(\phi_t(x))\Vert}.
\end{equation}
For $s=0$, let us define $F(t,u,0,\tau,y)$ such that
\begin{equation}
F(t,u,0,\tau,y)=\frac{\Vert DX(0)u \Vert}{\Vert DX(0)d\phi_t(0)u \Vert}d\phi_{\tau+t }(0)y+\frac{d\phi_{\tau+t }(0)u-d\phi_t(0)u}{\Vert DX(0)d\phi_t(0)u \Vert}.
\end{equation}
Equations (3.1) and (3.2) imply $F$ is a continuous map.
For $s\neq 0$, the first order derivatives of $F$ are:
\begin{align}
\frac{\partial F}{\partial y}(t,u,s,\tau,y)=&\frac{ \Vert X(x) \Vert}{\Vert X(\phi_t(x))\Vert}d\phi_{\tau+t}\big(x+\Vert X(x) \Vert y\big),\\
\frac{\partial F}{\partial \tau}(t,u,s,\tau,y)=&\frac{1}{\Vert X(\phi_t(x))\Vert}X\big(\phi_{\tau+t}(x+\Vert X(x) \Vert y) \big)\\
=&\frac{\Vert X(x) \Vert}{\Vert X(\phi_t(x))\Vert}\int^1_0DX\big(\phi_{\tau+t}(x+w\cdot\Vert X(x) \Vert y) \big)\\
&\cdot d\phi_{\tau+t}\big(x+w\cdot\Vert X(x) \Vert y\big)ydw+\frac{X(\phi_{\tau+t}(x) )}{\Vert X(\phi_t(x))\Vert}.
\end{align}
Let us define:
\begin{align}
\frac{\partial F}{\partial \tau}(t,u,0,\tau,y )=&\frac{\Vert DX(0)u \Vert}{\Vert DX(0)d\phi_t(0)u \Vert}DX(0)d\phi_{\tau+t }(0)y+\frac{DX(0)d\phi_{\tau+t}(0)u}{\Vert DX(0)d\phi_t(0)u \Vert},\\
\frac{\partial F}{\partial y}(t,u,0,\tau,y)=&\frac{\Vert DX(0)u \Vert}{\Vert DX(0)d\phi_t(0)u \Vert}d\phi_{\tau+t }(0),
\end{align}
By (3.3)-(3.8), the first order derivatives $\frac{\partial F}{\partial \tau}$ and $\frac{\partial F}{\partial y} $ are continuous.
Let us define $H(t,u,s,\tau,y)$ as following:
\begin{equation}
H(t,u,s,\tau,y)=\big\langle F(t,u,s,\tau,y ),\frac{X\big(\phi_t(x)\big)}{\big\Vert X\big(\phi_t(x)\big)\big\Vert} \big\rangle.
\end{equation}
From (3.2) and (3.4) one can see:
\begin{align*}
H(t,u,s,0,0)&=0,\\
\frac{\partial H}{\partial \tau}(t,u,s,0,0)&=\big\langle \frac{X\big(\phi_t(x)\big)}{\big\Vert X\big(\phi_t(x)\big)\big\Vert} ,\frac{X\big(\phi_t(x)\big)}{\big\Vert X\big(\phi_t(x)\big)\big\Vert}\big\rangle=1.
\end{align*}
By the Explicit Function Theorem, there exists a map $\tau=\tau(t,u,s,y)$ such that
\begin{equation}
H\big(t,u,s,\tau( t,u,s,y),y\big)=0.
\end{equation}
Meanwhile, the following holds:
\begin{itemize}
\item $\frac{\partial\tau}{\partial y }$ is continuous;
\item for fixed $t=T$ and $S>0$, there exists $\alpha>0$ such that for any $s\leq S$ the sizes the domains of $\tau=\tau(T,u,s,\cdot)$ are greater than $\alpha$.
\end{itemize}
By (3.10), the time for $y$ to reach $N_{\phi_t(x)}$ for $s\neq 0$ is $t+\tau(t,u,s,y)$, and therefore
\begin{equation}
F\big(t,u,s,\tau( t,u,s,y),y\big)=\mathcal{P}^*_{t,x}(y).
\end{equation}
Let us define:
\begin{equation}
\tilde{P}^*(t,u,s,y)=F\big(t,u,s,\tau( t,u,s,y),y\big).
\end{equation}
By (3.11) and (3.12), $\tilde{P}^*(t,u,s,y)=\mathcal{P}^*_{t,x}(y)$ for $s\neq 0$. By (3.2) and (3.9), one has $\tau( t,u,0,y)=\tau( t,-u,0,-y) $, and therefore
\begin{equation*}
F\big(t,u,0,\tau( t,u,0,y),y\big)=-F\big(t,-u,0,\tau( t,-u,0,-y),-y\big).
\end{equation*}
According to the definition of $\hat{N}$ and (3.12), $\tilde{P}^*$ induces a map $P^* $ in the neighborhood of $\hat{N}_{\Pi^{-1}(0)}(\alpha)$. Meanwhile, (3.11) implies $P^*$ is the extension of $\mathcal{P}^*_T$ nearby the singularity.

On the other hand, given a regular point $x$ and for any $y$ close to $x$, the domain of the rescaled Poincar\'e maps $ \mathcal{P}^*_{T,y}$ has uniformly bounded below sizes. Therefore there exists $0<\beta\leq \alpha$ such that the rescaled Poincar\'e map $ \mathcal{P}^*_T$ is well-defined on $\mathcal{N}(\beta)$. So we have proved that the rescaled Poincar\'e map $ \mathcal{P}^*_T$ can be extended to a continuous map $P^*_T:\hat{N}(\beta)\rightarrow\hat{N}$.
\end{proof}

\begin{definition}\label{d}
The generalized rescaled linear Poincar\'e flow $\psi_t^*:\hat{N}\rightarrow\hat{N}$ is defined as:
\begin{equation}
\psi_t^*(v)=\frac{\psi_t(v)}{\Vert \Phi_t|\mathcal{L}_x \Vert}~for~x\in\hat{N}_x.
\end{equation}
\end{definition}
\begin{lemma}\label{r}
The derivative of the extended rescaled Poincar\'e map $P^*_t$ is equal to the generalized rescaled linear Poincar\'e map $\psi^*_t$.
\end{lemma}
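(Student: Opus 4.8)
The plan is to differentiate the extended rescaled Poincar\'e map $P^*_t$ at a point of $\hat N(\beta)$ sitting over the zero section (i.e. at a vector $v$ in the fiber $\hat N_x$ with $\Vert v\Vert$ small, where $x$ ranges over $\hat M$), and to show that the resulting linear map on $\hat N_x$ agrees with the rescaled linear Poincar\'e flow $\psi^*_t$ of Definition \ref{d}. Since $P^*_t$ restricted to the base $\hat N\times\{0\}$ (the zero vectors) is, by Proposition \ref{beta}, the map induced on $\hat N$ by the normalized tangent flow along the reference line field $\mathcal L$, it suffices to compute the fiber derivative $\partial P^*_t/\partial v$ at $v=0$. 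Away from the singularities this is the classical fact that the derivative of the rescaled Poincar\'e map $\mathcal P^*_{t,x}$ along the normal bundle equals the rescaled linear Poincar\'e flow; the content of the lemma is that this identity persists on all of $\hat M$, including over $\Pi^{-1}(\mathrm{Sing}(X))$, once one uses the blowup.

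The concrete route: recall from the proof of Proposition \ref{beta} that near a singularity (after modding out coordinate charts) $P^*_t$ is induced by $\tilde P^*(t,u,s,y)=F\big(t,u,s,\tau(t,u,s,y),y\big)$. I would compute $\partial \tilde P^*/\partial y$ at $y=0$ by the chain rule,
\[
\frac{\partial \tilde P^*}{\partial y}\Big|_{y=0}=\frac{\partial F}{\partial y}\Big|_{y=0}+\frac{\partial F}{\partial \tau}\Big|_{\tau=\tau(t,u,s,0),\,y=0}\cdot\frac{\partial \tau}{\partial y}\Big|_{y=0},
\]
using the formulas (3.3)--(3.9) already derived in the excerpt, together with the implicit-function formula for $\partial\tau/\partial y$ obtained by differentiating the defining relation $H\big(t,u,s,\tau(t,u,s,y),y\big)=0$. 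The term $\partial\tau/\partial y$ is what corrects the raw differential $d\phi_{\tau+t}$ so that its image lands in the fiber $\hat N_{\hat\phi_t(\cdot)}$ orthogonal to the reference line $\mathcal L$; after this projection and after dividing by the flow-speed factor $\Vert X(x)\Vert/\Vert X(\phi_t(x))\Vert$ (which on the blowup over a singularity $\sigma$ becomes $\Vert DX(\sigma)u\Vert/\Vert DX(\sigma)\,d\phi_t(0)u\Vert$, i.e. exactly $\Vert\Phi_t|\mathcal L\Vert^{-1}$ as in Definition \ref{d} by (2.9)), what remains is precisely $\pi\circ\Phi_t$ normalized, which is $\psi^*_t$. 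At a regular point the same computation reproduces the standard identity, so the two cases glue into a single statement on $\hat N(\beta)$ by continuity of both sides and density of $\Pi^{-1}(M\setminus\mathrm{Sing}(X))$.

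The main obstacle is the bookkeeping at $s=0$: one must check that the piecewise definitions (3.2), (3.7), (3.8) of $F$ and its $\tau$-, $y$-derivatives on the blown-up locus are the genuine limits of the regular-point formulas, so that the chain-rule computation is legitimate there and the implicit function $\tau(t,u,0,y)$ differentiates correctly. This is exactly where the elementary ``polar coordinate'' description of the blowup (Lemma \ref{pullback} and Remark \ref{2}) pays off: it identifies $\tilde X(u,0)$ as the orthogonal projection of $DX(0)u$ and identifies the induced flow on $S^{n-1}\times\{0\}$ as the normalization of $\Phi_t(0)$, which is what forces the limiting derivative to be $\psi^*_t$ rather than something merely conjugate to it. Once that limiting behavior is verified, the identification $DP^*_t=\psi^*_t$ is a direct comparison of two explicit linear maps, and I would not expect further difficulty.
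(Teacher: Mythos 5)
Your proposal is correct and follows essentially the same route as the paper: differentiate $\tilde P^*(t,u,s,y)=F(t,u,s,\tau(t,u,s,y),y)$ in $y$ at $y=0$ via the chain rule for $s\neq 0$, recognize that the $\partial\tau/\partial y$ correction term is exactly what makes the image land in $\hat N_{\phi_t(x)}$ so the result is $\frac{\Vert X(x)\Vert}{\Vert X(\phi_t(x))\Vert}\pi(\Phi_t(x)v)=\psi^*_t(v)$, and then pass to $s=0$ by continuity of $\partial P^*/\partial y$. The only cosmetic difference is that the paper never needs the explicit implicit-function formula for $\partial\tau/\partial y$: since the derivative is of the form $\Phi_t(x)v+c\,X(\phi_t(x))$ and lies in $\mathcal N_{\phi_t(x)}$, it is automatically the orthogonal projection.
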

\begin{proof}
Let us compute directly from (3.12). For $s\neq 0$,
\begin{align*}
\frac{\partial P^*}{\partial y}(t,u,s,0)(v)=&\frac{\partial F}{\partial y}(t,u,s,0,0)v+\frac{\partial F}{\partial \tau}(t,u,s,0,0 )\big\langle\frac{\partial \tau}{\partial y}(0),v\big\rangle\\
=&\frac{\big\Vert X(x)\big\Vert}{\big\Vert X\big(\phi_t(x) \big)\big\Vert}\Phi_t(x)v+\frac{X\big(\phi_t(x)\big)}{\big\Vert X\big(\phi_t(x)\big)\big\Vert}\big\langle\frac{\partial \tau}{\partial y}(0),v\big\rangle\\
=&\frac{\big\Vert X(x)\big\Vert}{\big\Vert X\big(\phi_t(x) \big)\big\Vert}\big(\Phi_t(x)v+\frac{\big\langle\frac{\partial \tau}{\partial y}(0),v\big\rangle}{\big\Vert X(x)\big\Vert} X\big(\phi_t(x)\big)\big)\in \mathcal{N}_{\phi_t(x)} \\
=&\frac{\big\Vert X(x)\big\Vert}{\big\Vert X\big(\phi_t(x) \big)\big\Vert}\pi\big(\Phi_t(x)(v)\big)=\psi^*_t(v),
\end{align*}
Since $\frac{\partial P^*}{\partial y} $ is continuous, one has for $s=0$
\begin{align}
\frac{\partial P^*}{\partial y}(t,u,0,0)(v)= \frac{\big\Vert DX(0)u\big\Vert}{\big\Vert DX(0)\Phi_t(0)u \big\Vert}\pi\big(\Phi_t(0)(v)\big)=\psi^*_t(v).
\end{align}
The proof of this lemma is finished.
\end{proof}
\subsection{Reduction to linear vector fields}
As stressed in the introduction, we want to eliminate singular aperiodic chain recurrent classes of vector fields robustly without horseshoes. In fact, these chain recurrent classes are usually not Lyapunov stable, the dimension is greater than 3.

Suppose $\dim M\geq 4$, $X$ is a $C^1$ generic vector field robustly without horseshoes. For $\sigma\in \mathrm{Sing}(X)$, $T_\sigma M=E^s_\sigma\oplus E^u_\sigma $ is the hyperbolic splitting, the Lyapunov exponents are: $\lambda_1\leq \cdots\leq\lambda_i<0<\lambda_{i+1}\leq\cdots\leq \lambda_d$.
The saddle value $\mathrm{sv}(\sigma)$ is defined as: $\mathrm{sv}(\sigma)=\lambda_i+\lambda_{i+1}$. Suppose there exists $\rho\in \mathrm{Sing}(X)\cap C(\sigma)$ such that $\mathrm{sv}(\sigma)\mathrm{sv}(\rho) <0$. Let us recall the definition of $G_\sigma$ in the introduction:
\begin{align*}
G_\sigma=\{L\in PT_\sigma M:&\exists X_n\rightarrow X~in~C^1,x_n\in \mathrm{Per}(X_n)\\
&  such~that~\mathcal{O}(x_n)\hookrightarrow C(\sigma),\langle\exp^{-1}_{\sigma}(x_n) \rangle\rightarrow L\} ,
\end{align*}
and $ K_\sigma=G_\sigma\cup( C(\sigma)\setminus \mathrm{Sing}(X))$.
\begin{lemma}\label{G}
\begin{enumerate}
\item The hyperbolic splitting of $\sigma$ satisfies:
$$E^s_\sigma=E^{ss}_\sigma\oplus E^{cs}_\sigma,~E^u_\sigma=E^{cu}_\sigma\oplus E^{uu}_\sigma,$$
with $\dim E^{cs}_\sigma=\dim E^{cu}_\sigma =1$. Moreover, $G_\sigma\subset E^c_\sigma=E^{cs}_\sigma\oplus E^{cu}_\sigma $.
\item $K_\sigma$ admits a partially hyperbolic splitting with respect to the generalized rescaled linear Poincar\'e flow:
$$\hat{N}_{K_\sigma}=N^s\oplus N^c\oplus N^u,~\dim N^c=1. $$
\end{enumerate}
\end{lemma}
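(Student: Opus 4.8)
The plan is to reduce the whole statement to the (generalized, rescaled) linear Poincar\'e flow and then transport it, via the blowup of Section~\ref{blowup}, down to the fibers over the singularities. Without loss of generality assume $\mathrm{sv}(\sigma)>0>\mathrm{sv}(\rho)$ (otherwise replace $X$ by $-X$, which flips all saddle values and leaves the two conclusions symmetric). Since $X$ is $C^1$ generic and robustly without horseshoes it is in particular a star vector field, so the Liao--Gan--Wen theory of star flows (\cite{LGW} and the references therein) furnishes $T_0,\eta>0$ and a $C^1$-neighbourhood $\mathcal U$ of $X$ such that every periodic orbit of every $Y\in\mathcal U$ is hyperbolic and its normal bundle carries a $(T_0,\eta)$-dominated splitting for the linear Poincar\'e flow of $Y$. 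Applying this to the data $X_n\to X$, $x_n\in\mathrm{Per}(X_n)$, $\mathcal O(x_n)\hookrightarrow C(\sigma)$ defining $G_\sigma$ yields uniformly dominated splittings $\mathcal N_{\mathcal O(x_n)}=\Delta_n^s\oplus\Delta_n^u$; passing to a subsequence we may assume the stable indices are constant, and by genericity together with the impossibility of robust cycles the indices of periodic orbits Hausdorff-accumulating on $C(\sigma)$ take at most two consecutive values.

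For item (2) I would pull this picture back to $\hat M$. By Proposition~\ref{beta} the extended rescaled Poincar\'e map $P^*_{T_0}$ is defined on $\hat N(\beta)$, and by Lemma~\ref{r} its fibrewise derivative over $\hat N_{K_\sigma}$ is the generalized rescaled linear Poincar\'e flow $\psi^*_{T_0}$ of Definition~\ref{d}; the crucial point is that rescaling by the flow speed is exactly what makes $\psi^*$ continuous up to the fibers over $\mathrm{Sing}(X)$. Hence the Hausdorff limits inside $\hat N$ of $\bigcup_n\Delta_n^s$ and $\bigcup_n\Delta_n^u$ are continuous $\psi^*$-invariant subbundles $N^s,N^u$ over $K_\sigma$, between which a domination survives the rescaling (here the uniformity of $T_0,\eta$ is used); the orthogonal complement $N^c$ then has dimension equal to the index jump, hence at most $1$. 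That $N^c\ne 0$ uses $\rho$: if $\hat N_{K_\sigma}$ were hyperbolic for $\psi^*$ then $C(\sigma)$ would be singular hyperbolic, forcing all of its singularities to have saddle values of one common sign, against $\mathrm{sv}(\sigma)\mathrm{sv}(\rho)<0$. This gives $\hat N_{K_\sigma}=N^s\oplus N^c\oplus N^u$ with $\dim N^c=1$.

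For item (1) I would restrict this splitting to $G_\sigma=K_\sigma\cap PT_\sigma M$. Over the copy of $PT_\sigma M$ inside $\hat M$ the flow $\psi^*$ is governed by the rescaled action of $\Phi_t(\sigma)=\exp(t\,DX(\sigma))$ on $\langle u\rangle^\perp$, so one compares $N^s\oplus N^c\oplus N^u$ with the eigenspace decomposition of $DX(\sigma)$ (which has distinct eigenvalues by genericity): domination forbids the weakest stable eigendirection from lying in $N^s$ and the weakest unstable eigendirection from lying in $N^u$ once one divides by the flow speed, and $\mathrm{sv}(\sigma)>0$ is precisely the statement that the plane spanned by these two weakest eigendirections is the unique plane over $\sigma$ on which the rescaled linear flow is not dominated. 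Hence these weakest exponents are real and simple, $\dim E^{cs}_\sigma=\dim E^{cu}_\sigma=1$, and $E^s_\sigma=E^{ss}_\sigma\oplus E^{cs}_\sigma$, $E^u_\sigma=E^{cu}_\sigma\oplus E^{uu}_\sigma$ are the splittings induced by the domination. Finally, each $L\in G_\sigma$ is a limit $\langle\exp_\sigma^{-1}(x_n)\rangle\to L$ of nearest-approach directions of periodic orbits carrying the $(T_0,\eta)$-dominated splitting; a nonzero component of $L$ along $E^{ss}_\sigma$ or $E^{uu}_\sigma$ would, after rescaling, contradict that domination in the limit, so $L\in E^c_\sigma$, i.e. $G_\sigma\subset E^c_\sigma$.

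The only genuinely delicate step — and the place I expect the main work — is the limiting argument above: showing that the uniformly dominated splittings $\Delta_n^s\oplus\Delta_n^u$ over the $\mathcal O(x_n)$, whose Poincar\'e domains shrink to zero near the singularities, really converge, after blowup and flow-speed rescaling, to a genuine dominated splitting of $\psi^*$ over all of $\hat N_{K_\sigma}$ (including the singular fibers over $G_\sigma$) with complement of dimension exactly one. Proposition~\ref{beta} and Lemma~\ref{r} are what make the rescaled objects continuous up to these fibers in the first place, and $\mathrm{sv}(\sigma)\mathrm{sv}(\rho)<0$ is what simultaneously excludes the hyperbolic alternative $N^c=0$ and, together with the index-jump bound, the alternative $\dim N^c\ge2$. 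The three-dimensional instance of this is carried out in \cite{GY} and its higher-dimensional counterpart is essentially contained in \cite{Zheng}; the role of the present lemma is only to repackage it in the elementary blowup language of Section~\ref{blowup}.
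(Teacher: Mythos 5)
Your overall route---deduce from ``generic and robustly without horseshoes'' that $X$ is star, extract uniformly dominated splittings over the periodic orbits of nearby vector fields that define $G_\sigma$, and pass to the limit in the blowup---is exactly the machinery underlying the results the paper simply cites: \cite[Lemma 3.3.4]{Zheng} for the four-bundle splitting of $T_\sigma M$ and the splitting of $\hat N_{K_\sigma}$, and \cite[Lemma 4.4]{LGW} for $G_\sigma\subset E^c_\sigma$. So in spirit you and the paper agree, and your closing paragraph correctly locates where the heavy lifting lives.

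There is, however, one genuine gap, and it is precisely the one step the paper writes out rather than cites. Item 2 asserts \emph{partial hyperbolicity} for the \emph{rescaled} flow $\psi^*_t=\psi_t/\Vert\Phi_t|\mathcal{L}\Vert$, i.e.\ uniform contraction of $N^s$ and uniform expansion of $N^u$ under $\psi^*_t$; this is what the plaque-family construction in Lemma \ref{c} later consumes. Your limit argument only delivers a dominated splitting $N^s\oplus N^c\oplus N^u$ for $\psi_t$ (equivalently for $\psi^*_t$: mutual domination between two normal subbundles is insensitive to dividing both by the same scalar, which is all that ``a domination survives the rescaling'' can mean). Domination for $\psi_t$ does not give contraction of $N^s$ under $\psi^*_t$: along orbits approaching a singularity the flow speed $\Vert\Phi_t|\mathcal{L}\Vert$ itself decays, so $\Vert\psi_t|N^s\Vert\to 0$ does not imply $\Vert\psi_t|N^s\Vert/\Vert\Phi_t|\mathcal{L}\Vert\to 0$. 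The missing ingredient is the comparison of $N^s$ and $N^u$ with the flow direction, namely the paper's Claim that $N^s$ is dominated by $\mathcal{L}_{K_\sigma}$ and $\mathcal{L}_{K_\sigma}$ is dominated by $N^u$ (proved by observing $N^s_L=E^{ss}_\sigma$, $N^u_L=E^{uu}_\sigma$ for $L\in G_\sigma$ and arguing as in \cite[Lemma 5.3]{LGW}); only then does Definition \ref{d} convert domination into the required contraction/expansion. A secondary point: your justification of $\dim E^{cs}_\sigma=\dim E^{cu}_\sigma=1$ via ``the unique plane on which the rescaled linear flow is not dominated'' is not a correct reading of $\mathrm{sv}(\sigma)>0$ (the saddle value compares $\lambda_i$ with $\lambda_{i+1}$, it does not single out a non-dominated plane); the paper takes this statement, together with $\dim E^{ss}_\sigma\neq 0$, $\dim E^{uu}_\sigma\neq 0$ and $W^{ss}(\sigma)\cap C(\sigma)=W^{uu}(\sigma)\cap C(\sigma)=\{\sigma\}$, directly from \cite[Lemma 3.3.4]{Zheng}.
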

\begin{remark}
The definition of $G_\sigma$ and Item 1 of Lemma \ref{G} imply the periodic points of nearby vector fields whose orbits are close to $C(\sigma) $ accumulate $\sigma$ only along the two dimensional center direction.
\end{remark}
\begin{proof}
According to \cite[Lemma 3.3.4]{Zheng},
\begin{itemize}
\item the singularity $\sigma$ has a splitting $E^{ss}_\sigma\oplus E^{cs}_\sigma\oplus E^{cu}_\sigma\oplus E^{uu}_\sigma $ with $\dim(E^{cs}_\sigma)=\dim(E^{cu}_\sigma)=1 $;
\item $\dim E^{ss}_\sigma\neq 0,\dim E^{uu}_\sigma\neq 0$, $W^{ss}(\sigma)\cap C(\sigma)=\{\sigma\}$, and $W^{uu}(\sigma)\cap C(\sigma)=\{\sigma\}$;
\item $K_\sigma$ has a partially hyperbolic splitting with respect to the generalize linear Poincar\'e flow:
\begin{equation}
\hat{N}_{K_\sigma}=N^s\oplus N^c\oplus N^u,~\dim N^c=1
\end{equation}
\end{itemize}
By the same arguments as in \cite[Lemma 4.4]{LGW}, one has the following:
\begin{align*}
G_\sigma\subset( E^{ss}_\sigma\oplus E^{cs}_\sigma\oplus E^{cu}_\sigma)\cap (E^{cs}_\sigma\oplus E^{cu}_\sigma\oplus E^{uu}_\sigma)= E^{cs}_\sigma\oplus E^{cu}_\sigma.
\end{align*}
Therefore Item 1 is proved. The proof of Item 2 is based on the following claim:
\begin{claim}
$N^s$ is dominated by $\mathcal{L}_{K_\sigma}$ and $\mathcal{L}_{K_\sigma}$ is dominated by $N^u$.
\end{claim}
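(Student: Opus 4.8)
The plan is to prove the two parts of the Claim separately and to get the second from the first applied to $-X$: reversing the vector field exchanges $N^s$ with $N^u$ and the stable with the unstable directions at every singularity while fixing the line field $\mathcal{L}$, so that ``$\mathcal{L}_{K_\sigma}$ is dominated by $N^u$'' for $X$ becomes ``$N^s$ is dominated by $\mathcal{L}_{K_\sigma}$'' for $-X$. I therefore concentrate on the first assertion. Recalling that $\mathcal{L}$ is a $\Phi_t$-invariant line field in $\Pi^\ast(TM)$ (by (2.9) and the remark after it) while $N^s$ is a continuous $\psi_t$-invariant subbundle of $\hat N$, what must be produced is a uniform $T>0$ with $\Vert\Phi_T|N^s_x\Vert\cdot\Vert\Phi_{-T}|\mathcal{L}_{\hat\phi_T(x)}\Vert\le\tfrac12$ for all $x\in K_\sigma$. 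Since $K_\sigma$ is compact, $x\mapsto\Vert\Phi_T|N^s_x\Vert\cdot\Vert\Phi_{-T}|\mathcal{L}_{\hat\phi_T(x)}\Vert$ is continuous, and this quantity is submultiplicative along $\hat\phi_t$-orbits, it suffices to check the \emph{pointwise} statement that it tends to $0$ as $T\to+\infty$ for each fixed $x\in K_\sigma$; a standard Fekete-type argument on the compact invariant set $K_\sigma$ then upgrades this to the uniform inequality. I would split the verification into the singular fibers ($x\in G_\sigma$, and the corresponding fibers over the other singularities of $C(\sigma)$) and the regular part ($x\in C(\sigma)\setminus\mathrm{Sing}(X)$).

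On the singular fibers the estimate is explicit, and this is where Item~1 is used. For $\langle u\rangle\in G_\sigma$, Item~1 gives $u\in E^c_\sigma=E^{cs}_\sigma\oplus E^{cu}_\sigma$, so $\mathcal{L}_{\langle u\rangle}=\langle DX(\sigma)u\rangle$ is a line inside the $\Phi_t$-invariant plane $E^c_\sigma$; since $E^{ss}_\sigma\subset\mathcal{L}_{\langle u\rangle}^\perp=\hat N_{\langle u\rangle}$ is $\Phi_t$-invariant and $\psi_t$, $\Phi_t$ agree on it, a short check plus a dimension count identifies $N^s$ along the orbit of $\langle u\rangle$ with $E^{ss}_\sigma$. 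With the eigenvalues of $DX(\sigma)$ ordered as $\lambda_1\le\cdots\le\lambda_{i-1}<\lambda_i<0<\lambda_{i+1}<\lambda_{i+2}\le\cdots\le\lambda_d$ — the strict inner gaps being the one-dimensionality of $E^{cs}_\sigma,E^{cu}_\sigma$ supplied by \cite[Lemma~3.3.4]{Zheng} — one has $\Vert\Phi_T|E^{ss}_\sigma\Vert\le Ce^{(\lambda_{i-1}+\varepsilon)T}$ while, for $T>0$, $\Vert\Phi_T|\mathcal{L}_{\langle u\rangle}\Vert\ge ce^{(\lambda_i-\varepsilon)T}$, the slowest exponential rate being $\lambda_i$ and attained only when $\mathcal{L}_{\langle u\rangle}=E^{cs}_\sigma$, and being $\lambda_{i+1}$ for every other line in $E^c_\sigma$. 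Choosing $\varepsilon<\tfrac12(\lambda_i-\lambda_{i-1})$ then gives the pointwise domination, with the worst decay rate $\lambda_{i-1}-\lambda_i+2\varepsilon<0$ occurring at $\mathcal{L}_{\langle u\rangle}=E^{cs}_\sigma$; the same computation runs over each other singularity of $X$ in $C(\sigma)$, where \cite[Lemma~3.3.4]{Zheng} again furnishes the one-dimensional central directions needed.

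The regular part is the crux, and I expect it to be the main obstacle. There $\mathcal{L}_x=\langle X(x)\rangle$, so the inequality to check is $\Vert\Phi_T|N^s_x\Vert\cdot\Vert X(x)\Vert/\Vert X(\hat\phi_T(x))\Vert\to0$, i.e.\ that the flow direction stays dominated over $N^s$ along orbits that may linger arbitrarily long near the singular fibers — a genuinely singular-flow phenomenon, not a formal consequence of linear-Poincar\'e domination. My plan is a compactness-and-limits argument: from the definition of $G_\sigma$ take $X_n\to X$ in $C^1$ and periodic orbits $\mathcal{O}(x_n)$ of $X_n$ with $\mathcal{O}(x_n)\hookrightarrow C(\sigma)$; because $X$ is $C^1$-generic and robustly without horseshoes, for large $n$ each $\mathcal{O}(x_n)$ carries, with constants uniform in $n$, a dominated splitting of the pulled-back tangent bundle refining the partially hyperbolic splitting of the generalized linear Poincar\'e flow and with its stable piece dominated by $\langle X_n\rangle$ — this is exactly where the no-horseshoe hypothesis is spent, via the star-flow estimates of \cite{GY,LGW,Zheng}. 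Taking a Hausdorff limit and using the continuity of dominated splittings under $C^1$-limits of the vector field and Hausdorff limits of invariant sets, one obtains that $N^s$ is dominated by $\langle X\rangle$ over the closure of the regular part of $K_\sigma$; the passage between the linear-Poincar\'e cocycle and the cocycle $\Phi_t$ on $\Pi^\ast(TM)$ is harmless here since on compact subsets of $\hat M$ away from the singular fibers the distortion $\int_0^T\mathrm{div}\,X$ along orbit segments is uniformly bounded. Combining the two cases gives pointwise eventual domination on all of $K_\sigma$, hence the Claim; Item~2 then follows at once by writing $\psi^*_t=\psi_t/\Vert\Phi_t|\mathcal{L}\Vert$ and reading off that $\psi^*_t$ uniformly contracts $N^s$ and uniformly expands $N^u$.
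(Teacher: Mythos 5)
Your argument is correct and follows essentially the same route as the paper: the paper's entire proof consists of the identification $N^s_L=E^{ss}_\sigma$, $N^u_L=E^{uu}_\sigma$ for $L\in G_\sigma\subset E^c_\sigma$ (your singular-fiber step, with the spectral-gap estimate made explicit) together with an appeal to the arguments of \cite[Lemma 5.3]{LGW} for the remainder, which is precisely the approximation by periodic orbits of nearby vector fields, the uniform estimates furnished by the no-horseshoe hypothesis, and the passage to the limit that you carry out on the regular part. Your write-up is simply a fleshed-out version of what the paper delegates to that citation.
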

\begin{proof}[Proof of the claim]
For $L\in G_\sigma\subset E^c$, one has $N^s_L=E^{ss}_\sigma,~N^u_L=E^{uu}_\sigma$. Then the claim follows by similar arguments as \cite[Lemma 5.3]{LGW}.
\end{proof}
By definition \ref{d} and the claim, $N^s$/$N^u$ is contracted/expanded by the generalized rescaled linear Poincar\'e flow.
Therefore the splitting (3.15) is as wanted in the statement of Item 2.
The proof of this lemma is finished.
\end{proof}

In order to estimate the generalized rescaled Poincar\'e maps over $G_\sigma$, let us fix a local chart of $\sigma$ as in the global construction of blowing up singularities. Let us first compare the extended rescaled Poincar\'e map over a singularity with the counterpart of the linearized vector field.

Suppose $0\in \mathbb{R}^d$ corresponds to the singularity $\sigma$, $E^{ss}_\sigma,E^{cs}_\sigma,E^{cu}_\sigma,E^{uu}_\sigma$ are pairwise orthogonal, and $X(x)=Ax+f(x)$ with $f(0)=0,Df(0)=0$. Moreover, there exist $A^{ss}\in \mathrm{Gl}(i-1,\mathbb{R})$ and $A^{uu}\in \mathrm{Gl}(d-i-1,\mathbb{R})$ such that for any $x=(x^{ss},x^{cs},x^{cu},x^{uu})$,
\begin{equation}
Ax=(A^{ss}x^{ss},\lambda_i x^{cs},\lambda_{i+1} x^{cu},A^{uu}x^{uu} ).
\end{equation}
\begin{lemma}\label{l}
The extended rescaled Poincar\'e maps of $X$ over $PT_\sigma M$ are equal to the counterpart of the vector field $Y=Ax$.
\end{lemma}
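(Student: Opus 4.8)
The plan is to show that every object used to build $P^*_t$ over $PT_\sigma M$ in the proof of Proposition \ref{beta} depends on $X$ only through its linear part $A=DX(0)$, so that replacing $X$ by $Y=Ax$ changes nothing. First I would record that, because $0$ is a singularity, the variational equation along the fixed point $0$ gives $d\phi_t(0)=\Phi_t(0)=e^{tA}$, which is exactly the time-$t$ flow derivative at $0$ of the linear field $Y$. Substituting this into formula (3.2) shows that
\[
F(t,u,0,\tau,y)=\frac{\|Au\|}{\|Ae^{tA}u\|}e^{(\tau+t)A}y+\frac{e^{(\tau+t)A}u-e^{tA}u}{\|Ae^{tA}u\|}
\]
is literally the same function of $(t,u,\tau,y)$ for $X$ and for $Y$.

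Second, I would pin down the limiting reference direction that defines $H$ over the exceptional fiber. By (2.5)--(2.6), $\phi_t(s\cdot u)/s\to e^{tA}u$ as $s\to 0$; since $X(x)=Ax+f(x)$ with $f(x)=o(\|x\|)$ (as $Df(0)=0$), this yields $X(\phi_t(s\cdot u))/s\to Ae^{tA}u$, and $Ae^{tA}u\neq 0$ because hyperbolicity of $\sigma$ makes $A$ invertible. Hence
\[
\lim_{s\to 0}\frac{X(\phi_t(s\cdot u))}{\|X(\phi_t(s\cdot u))\|}=\frac{Ae^{tA}u}{\|Ae^{tA}u\|},
\]
so the continuous extension of $H$ from (3.9) to $s=0$ is $H(t,u,0,\tau,y)=\big\langle F(t,u,0,\tau,y),\,Ae^{tA}u/\|Ae^{tA}u\|\big\rangle$, which again involves only $A$; for $Y=Ax$ the normalized field $Y(\phi_t(s\cdot u))/\|Y(\phi_t(s\cdot u))\|$ equals $Ae^{tA}u/\|Ae^{tA}u\|$ for every $s\neq 0$, with the same limit.

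Since $H(t,u,0,\cdot,\cdot)$ is the same for $X$ and $Y$ and $\frac{\partial H}{\partial\tau}(t,u,0,0,0)=1\neq 0$, the continuous implicit-function solution $\tau(t,u,0,y)$ of (3.10) with $\tau(t,u,0,0)=0$ is the same, whence $\tilde{P}^*(t,u,0,y)=F(t,u,0,\tau(t,u,0,y),y)$ is the same by (3.12), and therefore the maps induced on $\hat{N}_{\Pi^{-1}(\sigma)}=\hat{N}|_{PT_\sigma M}$ coincide. One also checks that the ambient objects match: in the chosen chart $\exp_\sigma=\mathrm{Id}$, and the fiber $PT_\sigma M$, the line field $\mathcal{L}_{\langle u\rangle}=\langle DX(\sigma)u\rangle=\langle Au\rangle$, and hence $\hat{N}|_{PT_\sigma M}$, are defined using only $\exp_\sigma$ and $DX(\sigma)=A$, so they are unchanged when $X$ is replaced by $Y$. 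I expect no serious obstacle here; the lemma is essentially a bookkeeping consequence of the explicit construction in Proposition \ref{beta}. The one step that is not purely formal is the limit $\lim_{s\to 0}X(\phi_t(s\cdot u))/\|X(\phi_t(s\cdot u))\|$, where the nonlinear remainder $f$ must be shown negligible after normalization — but $Df(0)=0$ forces $f$ to be of higher order, so it drops out.
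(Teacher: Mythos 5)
Your proposal is correct and follows essentially the same route as the paper: both arguments reduce to observing that the formulas for $F(t,u,0,\tau,y)$ and for the implicit equation defining $\tau(t,u,0,y)$ involve only $DX(0)=A$ and $d\phi_t(0)=e^{tA}$, hence are unchanged when $X$ is replaced by $Y=Ax$. The only difference is that you make explicit two facts the paper leaves implicit (the variational-equation identity $d\phi_t(0)=e^{tA}$ and the limit of the normalized vector field as $s\to 0$), which is a harmless and indeed welcome amplification.
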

\begin{proof}
Recall the extended rescaled Poincar\'e map $P^*$ satisfies:
\begin{align}
&P^*(t,u,s,y)=F\big(t,u,s,\tau( t,u,s,y),y\big),\\
& H\big(t,u,s,\tau( t,u,s,y),y\big)=0,\\
&H(t,u,s,\tau,y)=\big\langle F(t,u,s,\tau,y ),\frac{X\big(\phi_t(x)\big)}{\big\Vert X\big(\phi_t(x)\big)\big\Vert} \big\rangle .
\end{align}
From (3.2), (3.18) and (3.19), one can see $\tau(t,u,0,y )$ satisfies:
\begin{equation}
\big\langle \frac{\big\Vert DX(0)u \big\Vert}{\big\Vert DX(0)d\phi_t(0)u \big\Vert}d\phi_{\tau+t }(0)y+\frac{d\phi_{\tau+t }(0)u-d\phi_t(0)u}{\big\Vert DX(0)d\phi_t(0)u \big\Vert}, \frac{ DX(0)d\phi_t(0)u}{\big\Vert DX(0)d\phi_t(0)u \big\Vert} \big\rangle=0.
\end{equation}
Since (3.20) is independent of $f(x)$, one has $\tau(t,u,0,y )$ and therefore $P^*(t,u,0,y ) $ are also independent of $f$. The proof of Lemma \ref{l} is finished.

\end{proof}
\begin{remark}\label{k}
Lemma \ref{l} indicates that the extended rescaled Poincar\'e maps over singularity are independent of the nearby regular orbits;
\end{remark}

\subsection{Chain recurrent central segment over singularity}
Let us first show the construction of central model.
\begin{lemma}\label{c}
There exists a central model $(\hat{K}_\sigma ,\hat{f})$, a finite cover $ \ell:\hat{K}_\sigma\rightarrow K_\sigma$ and a continuous map $\alpha: \hat{K}_\sigma\times[0,+\infty)\rightarrow \hat{N}$ such that
\begin{enumerate}
\item the map $\ell$ is at most two folds, $\hat{K}_\sigma$ is chain transitive, and the derivative of $\alpha$ with respect to the second variable is continuous;
\item for any $\hat{x}\in \hat{K}_\sigma$ and $x=\ell(\hat{x})$, the center plaque $F_x=\alpha(\{\hat{x}\}\times[0,1) )\subset \hat{N}_x$ is tangent to $N^c_x $, the family $\{F_x\}_{x\in K_\sigma}$ is locally invariant under the extended rescaled Poincar\'e map $P^*_1$;
\item the map $\alpha$ semi-conjugates $\hat{f}$ and $\hat{P}^*_1 $: $\alpha\circ\hat{f}|_{\{\hat{x} \}\times[0,1)}=\hat{P} ^*_{1,x}\circ \alpha|_{\{\hat{x} \}\times[0,1)}$.
\end{enumerate}

\end{lemma}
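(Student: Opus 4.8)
The plan is to run the construction of Crovisier \cite{Cr10}, in the flow-adapted form of \cite[Proposition 4.6]{Xiao}, feeding in the three ingredients already in hand: the extended rescaled Poincar\'e map $P^*_1$ is well defined and continuous on a uniform neighbourhood $\hat N(\beta)$ of the zero section (Proposition \ref{beta}); its fibrewise derivative is the generalized rescaled linear Poincar\'e flow $\psi^*_1$ (Lemma \ref{r}); and over $K_\sigma$ the cocycle $\psi^*_1$ is partially hyperbolic with one dimensional centre $\hat N_{K_\sigma}=N^s\oplus N^c\oplus N^u$ (Lemma \ref{G}). First I would apply the Hirsch--Pugh--Shub plaque family theorem to $P^*_1$ over $K_\sigma$ (see e.g.\ \cite{BDV}): since $K_\sigma$ is compact and invariant under the zero-section dynamics (which is just the blown-up time-one flow $\hat\phi_1$, because $P^*_{1}$ fixes the zero section) and $P^*_1$ is $C^1$ near the zero section with derivative realizing the dominated splitting, one obtains a continuous family $\{W^c_x\}_{x\in K_\sigma}$ of one dimensional $C^1$ plaques $W^c_x\subset\hat N_x$, tangent to $N^c_x$ at the origin, locally invariant under $P^*_1$, and depending continuously in the $C^1$ topology on $x$ (so the tangent to $W^c_x$ varies continuously along the plaque).

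The second step is the orientation cover and the parametrization. The line bundle $N^c\to K_\sigma$ need not be orientable; let $\ell:\hat K_\sigma\to K_\sigma$ be its orientation double cover, which is at most two-fold, and which carries a lift of the zero-section map $\hat\phi_1|_{K_\sigma}$. Pulling back the plaque family together with a coorientation of $N^c$, I would select continuously in $\hat x$ the ``positive half'' $W^{c,+}_{\hat x}$ of the plaque over $\ell(\hat x)$, parametrize it by arc length and renormalize, producing a continuous $\alpha:\hat K_\sigma\times[0,+\infty)\to\hat N$ with $\alpha(\hat x,0)=0$, with $\alpha(\{\hat x\}\times[0,1))=F_{\ell(\hat x)}$ tangent to $N^c$, and with $\frac{\partial\alpha}{\partial t}$ continuous. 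Setting $\hat f_1:=\hat\phi_1|_{K_\sigma}$ (lifted to $\hat K_\sigma$) and, for $t$ in the uniform range where local invariance applies, letting $\hat f_2(\hat x,t)$ be the parameter of $P^*_1(\alpha(\hat x,t))$ inside $W^{c,+}_{\hat f_1(\hat x)}$, then extending $\hat f_2$ continuously to $\hat K_\sigma\times[0,1]$, I obtain a skew product $\hat f(\hat x,t)=(\hat f_1(\hat x),\hat f_2(\hat x,t))$. It satisfies the central model axioms: $\hat f(\hat K_\sigma\times\{0\})=\hat K_\sigma\times\{0\}$ because on the zero section $P^*_1$ restricts to $\hat\phi_1$ and $K_\sigma$ is $\hat\phi_1$-invariant; and $\hat f$ is a local homeomorphism near $\hat K_\sigma\times\{0\}$ because $P^*_1$ is a local diffeomorphism there (invertible derivative $\psi^*_1$) and $\hat f_1$ is a homeomorphism. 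Items 2 and 3 are then exactly the local invariance of $\{F_x\}$ and the definition of $\hat f_2$ read through $\alpha$.

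The remaining and, in my view, only delicate point is the chain-transitivity part of Item 1. I would first check that $K_\sigma$ is chain transitive for $\hat\phi_1$: the set $C(\sigma)\setminus\mathrm{Sing}(X)$ is the generic part of a single chain recurrence class of $\hat\phi_1$, and each $L\in G_\sigma$ is by definition a limit of directions $\langle\exp_\sigma^{-1}(x_n)\rangle$ of periodic points $x_n$ of $X_n\to X$ with $\mathcal O(x_n)\hookrightarrow C(\sigma)$, so by the standard genericity arguments (as in \cite{GY,Zheng}) $L$ belongs, after blowup, to that same class; hence $K_\sigma$ is a chain recurrence class of $\hat\phi_1$, in particular chain transitive. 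For the cover: if the orientation double cover is disconnected, then $N^c$ is already coorientable over $K_\sigma$ and one takes $\hat K_\sigma=K_\sigma$ with $\ell$ the identity (one fold); if it is connected, I would argue as in \cite{Cr10} and \cite[Proposition 4.6]{Xiao} that a connected double cover of a chain transitive set equipped with a lift of the base map is itself chain transitive, realizing the nontrivial deck transformation by concatenating the lift of an $\varepsilon$-pseudo-orbit of a suitable loop in the base. I expect this chain-transitivity bookkeeping, together with the verification that the extended map near $\sigma$ does not disrupt it, to be the only genuinely nontrivial part; the plaque family, the half-plaque parametrization and the skew-product verification are the routine part of Crovisier's machinery, now legitimate in the singular setting precisely because of Proposition \ref{beta} and Lemmas \ref{r} and \ref{G}.
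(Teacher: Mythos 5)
Your proposal is correct and follows essentially the same route as the paper: the paper's own proof simply observes that Lemma \ref{r} and Item 2 of Lemma \ref{G} make $P^*_1$ partially hyperbolic with one dimensional center over $K_\sigma$ and then defers entirely to the arguments of \cite[Proposition 4.6]{Xiao} (i.e.\ Crovisier's plaque-family and orientation-cover machinery), which is exactly what you carry out in detail. The only difference is one of completeness: you explicitly supply the HPS plaque family, the double cover, the parametrization, and the chain-transitivity bookkeeping that the paper leaves implicit in the citation.
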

\begin{remark}
As indicated by Lemma \ref{c}, the central model $(\hat{K}_\sigma ,\hat{f})$ depicts the dynamics of the extended rescaled Poincar\'e map $\hat{P} ^*_1$ along the one dimensional center direction $N^c$.
\end{remark}
\begin{proof}
According to Lemma \ref{r} and Item 2 of Lemma \ref{G}, the extended rescaled Poincar\'e map $P^*_1 $ is partially hyperbolic with one dimensional center. Then one can see the lemma holds by following the arguments in \cite[Proposition 4.6]{Xiao}.

\end{proof}

With the preparations, let us begin the proof of the main theorem.
\begin{proof}[Proof of main theorem]
By Item 1 of lemma \ref{G}, there exists
$$u=(0^{ss},\cos\theta,\sin\theta,0^{uu}),$$
such that $\langle u\rangle \in G_\sigma$, $\theta\neq \frac{k\pi}{2}$.
According to Item 2 of Lemma \ref{G}, one has
 $$N^c_{\langle u\rangle }=\langle v\rangle,~with~v=(0^{ss},-\sin\theta,\cos\theta,0^{uu} ) .$$
By Lemma \ref{l}, the following holds for $\lambda_i=-1,~\lambda_{i+1}=1$:
\begin{align}
\psi_t^*(v)=&\frac{\big\Vert Au\big\Vert}{\big\Vert Ae^{tA}u \big\Vert}\big(e^{tA}v-\frac{\langle e^{tA}v,Ae^{tA}u\rangle}{\langle Ae^{tA}u,Ae^{tA}u\rangle}Ae^{tA}u  \big)\\
=&\frac{\cos^2\theta-\sin^2\theta}{\big(\sqrt{e^{-2t}\cos^2\theta+e^{2t}\sin^2\theta}\big)^3}(0^{ss}, e^t\sin\theta,e^{-t}\cos\theta,0^{uu}).
\end{align}
From (3.22) one can see
\begin{equation}
\lim_{t\rightarrow\pm\infty}\psi_t^*(v)= 0 .
\end{equation}
Therefore $N^c_{\langle u\rangle }$ is contracted exponentially by the generalized rescaled linear Poincar\'e flow as $t\rightarrow\pm\infty$. By Item 2 of Lemma \ref{c} and (3.14), the center plaque $F_{\langle u\rangle}$ is contracted exponentially by both the extended rescaled Poincar\'e map $P^*_1$ and its inverse. From Item 3 of Lemma \ref{c}, for any $\hat{x}\in\ell^{-1}(\langle u\rangle)$, the fiber $\{ \hat{x}\}\times[0,1]$ contains a segment $\gamma$ that is contracted by both $\hat{f}$ and $\hat{f}^{-1}$. The segment $\gamma$ is in the same chain recurrent class as $\hat{K}_\sigma $ and therefore is a chain recurrent central segment. The proof of the main theorem is finished.

\end{proof}
\begin{remark}
The assumption $\lambda_i=-1,~\lambda_{i+1}=1 $ is not essential in the proof of the main theorem, but it simplifies the computation. In fact, (3.23) holds for any $\lambda_i<0,~\lambda_{i+1}>0 $.
\end{remark}
\subsection{Central model isinsufficient to solve weak Palis conjecture in higher dimensional singular flow }
The main theorem illustrates central model is insufficient to eliminate non-Lyapunov stable singular aperiodic chain recurrent classes. Let us explain it explicitly.

Suppose $(\hat{K},\hat{f}) $ is a central model and the base $\hat{K}\times \{0\} $ is chain transitive. The creation of horseshoe via center model is based on the following dichotomy with the flavor of Conley theory:
\begin{itemize}
\item either there exists chain recurrent central segment;
\item or the base $ \hat{K}\times \{0\}$ admits arbitrarily small attracting/repelling neighborhoods.
\end{itemize}

But in the central model $(\hat{K}_\sigma ,\hat{f})$ given by Lemma \ref{c}, neither aspects of the dichotomy create horseshoes.

First, the central model given by Lemma \ref{c} admits no chain recurrent central segments over regular orbits. To be more precise,
\begin{proposition}\label{n}
In the central model $(\hat{K}_\sigma ,\hat{f})$ of Lemma \ref{c}, for any $\hat{x}\in\ell^{-1}( C(\sigma)\setminus \mathrm{Sing}(X))$ and any $0<a<1$, the segment $\{\hat{x}\}\times [0,a]$ is not a chain recurrent central segment.
\end{proposition}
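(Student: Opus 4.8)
The plan is to exploit the fact that for a \emph{regular} point $x\in C(\sigma)\setminus\mathrm{Sing}(X)$, the center plaque $F_x$ lives in the normal bundle $\hat N_x$ over a genuine orbit segment of the flow, and that along such orbits the generalized rescaled linear Poincar\'e flow $\psi^*_t$ — which by Lemma \ref{r} is the derivative of $P^*_1$ — has uniformly hyperbolic behaviour on $N^s\oplus N^u$ but, crucially, the one-dimensional center $N^c$ is \emph{neutral} rather than recurrent in a way that returns to the zero section. First I would recall the standard dichotomy of central models: either $\hat K_\sigma\times\{0\}$ has arbitrarily small attracting/repelling neighbourhoods, or there is a chain recurrent central segment; but here we do not need the dichotomy, we need the sharper local statement that over a regular fibre $\hat x$ no segment $\{\hat x\}\times[0,a]$ chain-accumulates the base. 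The key point is that for $x$ regular, $\|X(x)\|$ is bounded away from $0$ and $\infty$ along the whole orbit through $x$ inside the compact class $C(\sigma)$, so the rescaling factor $\|\Phi_t|\mathcal L_x\|$ in Definition \ref{d} is uniformly comparable to the true tangent-flow growth; consequently the extended rescaled Poincar\'e map $P^*_1$ near $\hat x$ differs from the ordinary (rescaled) Poincar\'e map only by bounded factors, and the center plaque $F_x$ is a genuinely locally invariant $C^1$ curve tangent to $N^c_x$ with no singular degeneration.

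The steps I would carry out, in order. (1) Fix $\hat x\in\ell^{-1}(C(\sigma)\setminus\mathrm{Sing}(X))$ and $0<a<1$; let $x=\ell(\hat x)$. Since $x\notin\mathrm{Sing}(X)$ and $C(\sigma)$ is compact, choose $\delta>0$ with $\delta<\|X(y)\|<\delta^{-1}$ for all $y$ in a neighbourhood of the $P^*_1$-orbit of $\hat x$ in $K_\sigma$ not meeting $\mathrm{Sing}(X)$; then the blowup projection $\Pi$ is a diffeomorphism near $\hat x$, so $P^*_1$ near $\hat x$ is conjugate to the honest rescaled Poincar\'e map $\mathcal P^*_1$. (2) Invoke Item 2 of Lemma \ref{G}: along $K_\sigma$ the splitting $\hat N=N^s\oplus N^c\oplus N^u$ is partially hyperbolic for $\psi^*$, and by Lemma \ref{r} the restriction of $DP^*_1$ to $N^c$ at $\hat x$ is exactly $\psi^*_1|_{N^c_x}$. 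Show (this is where the genericity/absence of horseshoes is used, as in \cite{Xiao,Cr10}) that along the whole orbit of a regular $\hat x$ the center Lyapunov behaviour of $\psi^*$ cannot be neutral-recurrent with return to the zero section — more precisely, that if some $\{\hat x\}\times[0,a]$ were a chain recurrent central segment then, pulling back through the semiconjugacy $\alpha$ of Lemma \ref{c} Item 3, the extended rescaled Poincar\'e map would have a nontrivial chain recurrent piece of the locally invariant center manifold $\{F_y\}$ over the \emph{regular} set, and this center manifold would then reproduce, inside the linear Poincar\'e flow picture, a one-dimensional center orbit that is chain recurrent with positive ``height'' while the flow speed is bounded below. (3) Derive the contradiction: such a configuration would let one apply the horseshoe-creation mechanism of Crovisier's central-model dichotomy exactly as in the nonsingular case (\cite[\S4]{Xiao}), because over regular points the flow-speed obstruction described after the Main Theorem is absent — the zero flow speed only kills the mechanism over singularities. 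Hence $X$ would not be robustly without horseshoes, contradicting the standing hypothesis. Therefore no such $a$ exists.

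The main obstacle is step (2): showing rigorously that a chain recurrent central segment over a \emph{regular} fibre propagates, via the locally invariant family $\{F_x\}$ and the semiconjugacy $\alpha$, to an honest chain recurrence phenomenon in $\hat N_{C(\sigma)\setminus\mathrm{Sing}(X)}$ that is ``visible'' to the rescaled linear Poincar\'e flow in a way parallel to the diffeomorphism setting. One must be careful that chain recurrence in the central model $(\hat K_\sigma,\hat f)$ — which is defined with $\varepsilon$-pseudo-orbits in $\hat K_\sigma\times[0,\infty)$ — actually transfers to chain recurrence in $\hat N$ under $P^*_1$; this uses that $\alpha$ has continuous derivative in the second variable (Lemma \ref{c} Item 1) so that the center plaques shadow faithfully. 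Once that transfer is established, the conclusion is forced by the same argument (Crovisier's dichotomy plus the hypothesis of robust absence of horseshoes) that underlies Crovisier's and Xiao's proofs of the weak Palis conjecture for $C^1$ diffeomorphisms and nonsingular flows, the only new ingredient being the elementary flow-speed bound that is available precisely because $\hat x$ is regular.
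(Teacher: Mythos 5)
Your overall intuition is the right one --- over a regular point the rescaling factor $\Vert X(x)\Vert$ is nonzero, so a chain recurrent central segment would produce a genuine positive-length object, which is what must be contradicted --- but the proposal has a genuine gap and one false step. First, the false step: you claim that $\Vert X\Vert$ is bounded away from $0$ ``along the whole orbit through $x$ inside the compact class $C(\sigma)$'' and in a neighbourhood of the $P^*_1$-orbit of $\hat x$. This is not true: $C(\sigma)$ contains singularities and is chain transitive, so the orbit of a regular point of $C(\sigma)$ typically accumulates on $\mathrm{Sing}(X)\cap C(\sigma)$, and no uniform lower bound on the flow speed holds along it. Consequently your step (3), which invokes the horseshoe-creation mechanism ``exactly as in the nonsingular case'' because ``the flow-speed obstruction is absent over regular points,'' cannot be cited verbatim: the arguments of the nonsingular setting use uniform section sizes along entire orbits, and only the single base point $x$ of the segment is known to be regular here. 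Deferring the whole contradiction to that citation is the missing idea, not a routine verification.

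The paper's proof is both more local and more elementary. From the hypothetical chain recurrent central segment it forms the actual curve $\gamma=\exp_x\big(\Vert X(x)\Vert\,\alpha(\{\hat x\}\times[0,a])\big)$, and uses only two facts: $T_x\gamma=N_x$ (so $\gamma$ is a positive-length curve transverse to the flow, because $\Vert X(x)\Vert>0$ at the single point $x$) and $\gamma\subset C(\sigma)$ (chain recurrence transfers through $\alpha$). It then takes a periodic orbit $\mathcal{O}(p)$ Hausdorff-close to $C(\sigma)$ passing near $\gamma$ and, using the strong stable and strong unstable manifolds furnished by the partial hyperbolicity of Lemma \ref{G}, builds a pseudo-orbit heteroclinic cycle between $\mathcal{O}(p)$ and $\gamma$. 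This places $\mathcal{O}(p)$ in the same chain class as $\gamma$, hence $\mathcal{O}(p)\subset C(\sigma)$, contradicting \emph{aperiodicity} of $C(\sigma)$ --- no horseshoe, no perturbation of the vector field, and no global control of the flow speed along the orbit is needed. To repair your argument you would either have to supply this concrete cycle construction yourself, or justify in detail why the Crovisier--Xiao horseshoe mechanism survives the presence of singularities elsewhere in the class; as written, the decisive step is asserted rather than proved.
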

\begin{proof}
Suppose $\hat{x}\in\ell^{-1}( C(\sigma)\setminus \mathrm{Sing}(X))$ such that $\{\hat{x}\}\times [0,a]$ is a chain recurrent central segment, $x=\ell(\hat{x})$, $\hat{\gamma}=\alpha(\{\hat{x}\}\times [0,a] )$, and
\begin{equation}
\gamma=\exp_x(\Vert X(x)\Vert \hat{\gamma }).
\end{equation}
By Lemma \ref{c}, one has
\begin{equation}
T_x\gamma=N_x,~\gamma\subset C(\sigma).
\end{equation}
Let $\mathcal{O}(p)$ be a periodic orbit close to $C(\sigma )$  and passing nearby $\gamma$.
Let us show that $\mathcal{O}(p)$ and $\gamma$ form a heteroclinic cycle by Figure 1.

As indicated by the figure, there exists a pseudo-orbit from $\mathcal{O}(p)$ to the strong unstable manifold of $\mathcal{O}(p)$, reaching the strong stable manifold of a point $x\in \gamma$, then going inside $C(\sigma)$ from $x$ to certain point $y\in \gamma$, going on along the strong unstable manifold of $y$, until reaching the strong stable manifold of $\mathcal{O}(p) $, and along the strong stable manifold of $\mathcal{O}(p)$ back to $\mathcal{O}(p)$.

Therefore $\mathcal{O}(p) $ is contained in the same chain recurrent class as the segment $\gamma$. Meanwhile, (3.25) implies $\mathcal{O}(p)\subset C(\sigma) $, a contradiction to the assumption of $C(\sigma)$ being aperiodic. Therefore there exist no chain recurrent central segments over regular points in the central model $(\hat{K}_\sigma ,\hat{f})$.
\begin{figure}[h]
\centering
\begin{tikzpicture}[scale=4]

\draw (0,0) -- (1,0) -- (1,1.2) -- (0,1.2) -- (0,0);
\draw (0,0) -- (-0.8,-0.3) -- (-0.8, 0.9) -- (0,1.2) -- (0,0);
\draw[directed] (0,0.7) -- (0.8,0.7);
\draw[directed] (0.8,0.7) -- (0.4,0.2);
\draw[directed] (0.4,0.2) -- (-0.6,0.2);
\draw[directed] (-0.6,0.2) -- (0,.425);

\node[above] at (-0.8,0.9) {$ss$};
\node[above] at (0,1.2) {$\gamma$};
\node[above] at (0.1,0.7) {$y$};
\node[below,right] at (0.4,0.2) {$\mathcal{O}(p)$};
\node[above,left] at (0,.44) {$x$};
\node[right] at (1,1.2) {$uu$};

\end{tikzpicture}
\caption{Chain recurrent central segment and heteroclinic cycle}\label{fig1}
\end{figure}
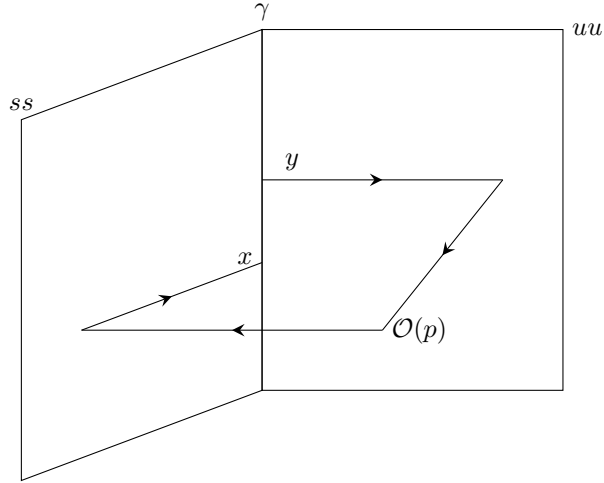

\end{proof}

Second, as one can infer from (3.24), once the zero flow speed is taken into account, the existence of chain recurrent central segment in the main theorem does not increase the dimension of the chain recurrent class along the center direction. Therefore in the central model $(\hat{K}_\sigma,\hat{f} ) $, the chain recurrent central segment guaranteed by the main theorem does not create horseshoes.

Third, the dichotomy about central model implies $(\hat{K}_\sigma,\hat{f})$ does not have arbitrarily small trapping/repelling neighborhoods. Therefore the other mechanism for the birth of horseshoes by central model does not work.

So we come to the conclusion: The strategy of central model does not work to eliminate generic aperiodic chain recurrent classes with singularities whose saddle values have different signs. Differing from $C^1$ diffeomorphisms and nonsingular flows, solo using central model isbe insufficient to solve weak Palis conjecture in higher dimensional ($\geq4$) singular flows.

\section*{Appendix}
As indicated by Remark \ref{k}, the extended rescaled Poincar\'e maps over singularity are determined exclusively by the linearized vector field of the singularity. Therefore we believe it is interesting to calculate the extended rescaled Poincar\'e map of linear vector fields. We compute upto the second order derivatives. It turns out the extended rescaled Poincar\'e maps of two dimensional linear vector fields are generally nonlinear.
\subsection*{A1. Extended rescaled Poincar\'e map under moving orthogonal frame}
For $A\in \mathrm{Gl}(2,\mathbb{R})$, the solution of the differential equation
\begin{equation*}
 \dot{x}=Ax,
\end{equation*}
is $\phi_t(x)=e^{tA}x$. Assume $u=(x_1,x_2)$ is a unit vector, $y\in \mathbb{R}$, $(Au )^\perp $ is a rotation of $Au$ by $\frac{\pi}{2}$. Let us define the \emph{extended rescaled Poincar\'e map under moving orthogonal frame} by the following equation:
\begin{align*}
P^*\big(t,u,0,y\frac{(Au )^\perp}{\Vert Au \Vert }\big)=F^*_{t,u}(y)\frac{(Ae^{tA}u )^\perp}{\Vert (Ae^{tA}u )^\perp \Vert }.
\end{align*}
The extended rescaled Poincar\'e map $P^*$ satisfies:
\begin{align*}
P^*\big(t,u,0,y\frac{(Au )^\perp}{\Vert Au \Vert }\big)=\frac{e^{\big(\tau(y)+t\big)A }y(Au )^\perp}{\Vert Ae^{tA}u \Vert}+\frac{e^{\big(\tau(y)+t\big)A }u-e^{tA}u}{\Vert Ae^{tA}u \Vert},
\end{align*}
with a $\tau=\tau(y)$ such that
\begin{equation*}
\big\langle \frac{e^{\big(\tau(y)+t\big)A }y(Au )^\perp}{\Vert Ae^{tA}u \Vert}+\frac{e^{\big(\tau(y)+t\big)A }u-e^{tA}u}{\Vert Ae^{tA}u \Vert},\frac{Ae^{tA}u}{\Vert Ae^{tA}u \Vert}  \big\rangle =0.
\end{equation*}
\begin{pro}\emph{
The second order derivative of the extended rescaled Poincar\'e map satisfies:
\begin{align*}
\frac{d^2F^*_{t,u}}{dy^2}(0)=2\frac{\big\langle Ae^{tA }(Au )^\perp,(Ae^{tA}u )^\perp \big\rangle}{\big\langle Ae^{tA}u,Ae^{tA}u \big\rangle}\frac{d\tau}{dy}(0)+\frac{\big\langle A^2e^{tA }u,(Ae^{tA}u )^\perp \big\rangle}{\big\langle Ae^{tA}u,Ae^{tA}u \big\rangle}\big(\frac{d\tau}{dy}(0) \big)^2.
\end{align*}}
\end{pro}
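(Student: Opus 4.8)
The plan is to collapse $F^*_{t,u}$ to an honest scalar function of $y$ by pairing the defining relation with the frame vector $(Ae^{tA}u)^\perp$, differentiate twice, and then use one orthogonality identity to annihilate the term carrying the second derivative of $\tau$. The point of the appendix is precisely that this second derivative is \emph{not} zero, so organizing the computation to avoid the otherwise painful term is the whole game.

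First I would isolate $F^*_{t,u}(y)$. Since $(Ae^{tA}u)^\perp$ is the rotation of $Ae^{tA}u$ by $\pi/2$, it has the same norm, $\Vert (Ae^{tA}u)^\perp\Vert=\Vert Ae^{tA}u\Vert$; pairing the identity $P^*\big(t,u,0,y(Au)^\perp/\Vert Au\Vert\big)=F^*_{t,u}(y)\,(Ae^{tA}u)^\perp/\Vert (Ae^{tA}u)^\perp\Vert$ with $(Ae^{tA}u)^\perp$ and inserting the explicit expression for $P^*$ gives
\begin{align*}
F^*_{t,u}(y)=\frac{1}{\langle Ae^{tA}u,Ae^{tA}u\rangle}\Big(\big\langle e^{(\tau(y)+t)A}\big(u+y(Au)^\perp\big),(Ae^{tA}u)^\perp\big\rangle-\big\langle e^{tA}u,(Ae^{tA}u)^\perp\big\rangle\Big).
\end{align*}
The second inner product is constant in $y$ and hence irrelevant to the second derivative. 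Writing $\gamma(y)=e^{(\tau(y)+t)A}\big(u+y(Au)^\perp\big)$ and $\Psi(y)=\langle\gamma(y),(Ae^{tA}u)^\perp\rangle$, we obtain $\frac{d^2F^*_{t,u}}{dy^2}(0)=\Psi''(0)/\langle Ae^{tA}u,Ae^{tA}u\rangle$. Moreover $\tau(0)=0$ — this follows from the identity $H(t,u,s,0,0)=0$ noted in the proof of Proposition \ref{beta} together with the uniqueness in the implicit function theorem — so $\gamma(0)=e^{tA}u$.

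Next I would compute $\Psi''(0)$ by the chain rule. From $\gamma'(y)=\frac{d\tau}{dy}(y)\,Ae^{(\tau(y)+t)A}\big(u+y(Au)^\perp\big)+e^{(\tau(y)+t)A}(Au)^\perp$, differentiating once more and evaluating at $y=0$ yields
\begin{align*}
\gamma''(0)=\frac{d^2\tau}{dy^2}(0)\,Ae^{tA}u+\Big(\frac{d\tau}{dy}(0)\Big)^2A^2e^{tA}u+2\frac{d\tau}{dy}(0)\,Ae^{tA}(Au)^\perp.
\end{align*}
Pairing with $(Ae^{tA}u)^\perp$, the crucial cancellation is that $\langle Ae^{tA}u,(Ae^{tA}u)^\perp\rangle=0$ by the very definition of the perpendicular, so the $\frac{d^2\tau}{dy^2}(0)$-term disappears, leaving
\begin{align*}
\Psi''(0)=2\frac{d\tau}{dy}(0)\,\big\langle Ae^{tA}(Au)^\perp,(Ae^{tA}u)^\perp\big\rangle+\Big(\frac{d\tau}{dy}(0)\Big)^2\big\langle A^2e^{tA}u,(Ae^{tA}u)^\perp\big\rangle,
\end{align*}
and dividing by $\langle Ae^{tA}u,Ae^{tA}u\rangle$ is exactly the asserted formula.

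The computation is short once set up this way, and the step I would flag as the engine of the argument is the orthogonality $\langle Ae^{tA}u,(Ae^{tA}u)^\perp\rangle=0$: it is what makes $\frac{d^2\tau}{dy^2}(0)$ drop out, sparing us a second-order implicit differentiation of $H\big(t,u,0,\tau(y),y\big)=0$. By contrast $\frac{d\tau}{dy}(0)$ survives in the final expression and should be read as the quantity obtained by the usual first-order implicit differentiation, so the right-hand side is well posed. The only other item worth checking is the elementary norm identity $\Vert (Ae^{tA}u)^\perp\Vert=\Vert Ae^{tA}u\Vert$ that produces the clean denominator $\langle Ae^{tA}u,Ae^{tA}u\rangle$.
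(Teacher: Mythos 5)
Your proposal is correct and follows essentially the same route as the paper: both pair the explicit expression for $P^*$ with the unit normal $(Ae^{tA}u)^\perp/\Vert(Ae^{tA}u)^\perp\Vert$ and apply the chain rule through $\tau(y)$, your $\gamma(y)$ being the paper's $Q(t,u,\tau(y),y)$ up to the normalizing factor. The only difference is expository: you state explicitly that the $\frac{d^2\tau}{dy^2}(0)$-term is annihilated by $\langle Ae^{tA}u,(Ae^{tA}u)^\perp\rangle=0$, a cancellation the paper uses silently by simply omitting that term from its chain-rule expansion.
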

\begin{proof}
Let us define $H(t,u,\tau,y)$ by:
\begin{equation*}
H(t,u,\tau,y)=\big\langle \frac{e^{(\tau+t)A }y(Au )^\perp}{\Vert Ae^{tA}u \Vert}+\frac{e^{(\tau+t)A }u-e^{tA}u}{\Vert Ae^{tA}u \Vert},\frac{Ae^{tA}u}{\Vert Ae^{tA}u \Vert}  \big\rangle.
\end{equation*}
Then $H(t,u,0,0)=0 $. Meanwhile, $\frac{d\tau}{dy}(0)  $ satisfies:
\begin{align*}
\frac{d\tau}{dy}(0)&=-\frac{\frac{\partial H}{\partial  y}(t,u,0,0)}{\frac{\partial H}{\partial  \tau}(t,u,0,0)}=-\frac{\big\langle e^{tA }(Au )^\perp , Ae^{tA}u \big\rangle}{\big\langle Ae^{tA}u,Ae^{tA}u \big\rangle}.
\end{align*}
We can see the following holds:
\begin{align*}
F^*_{t,u}(y)=&\big\langle P^*\big(t,u,0,y\frac{(Au )^\perp}{\Vert Au \Vert }\big) ,\frac{(Ae^{tA}u )^\perp}{\Vert (Ae^{tA}u )^\perp \Vert } \big\rangle=\big\langle Q\big(t,u,\tau(y),y\big),\frac{(Ae^{tA}u )^\perp}{\Vert (Ae^{tA}u )^\perp \Vert } \big\rangle,
\end{align*}
with $Q(t,u,\tau,y)$ defined as following
\begin{align*}
Q(t,u,\tau,y)=\frac{e^{(\tau+t)A }y(Au )^\perp}{\Vert Ae^{tA}u \Vert}+\frac{e^{(\tau+t)A }u-e^{tA}u}{\Vert Ae^{tA}u \Vert}.
\end{align*}
Therefore the second order derivative of $F^*_{t,u}(y) $ satisfies:
\begin{align*}
\frac{d^2F^*_{t,u}}{dy^2}(0)=&\big\langle 2\frac{\partial^2 Q(t,u,0,0)}{\partial y\partial\tau}\frac{d\tau}{dy}(0),\frac{(Ae^{tA}u )^\perp}{\Vert (Ae^{tA}u )^\perp \Vert }  \big\rangle\\
&+\big\langle \frac{\partial^2 Q(t,u,0,0)}{\partial \tau^2}(0)\big(\frac{d\tau}{dy}(0) \big)^2 ,\frac{(Ae^{tA}u )^\perp}{\Vert (Ae^{tA}u )^\perp \Vert }  \big\rangle\\
=&2\frac{\big\langle Ae^{tA }(Au )^\perp,(Ae^{tA}u )^\perp \big\rangle}{\big\langle Ae^{tA}u,Ae^{tA}u \big\rangle}\frac{d\tau}{dy}(0)+\frac{\big\langle A^2e^{tA }u,(Ae^{tA}u )^\perp \big\rangle}{\big\langle Ae^{tA}u,Ae^{tA}u \big\rangle}\big(\frac{d\tau}{dy}(0) \big)^2.
\end{align*}
\end{proof}
\begin{rmk}
To see whether the second order derivatives vanish, we need to compute the following four inner products:
\begin{align*}
&\big\langle e^{tA }(Au )^\perp , Ae^{tA}u \big\rangle,~\big\langle Ae^{tA }(Au )^\perp , (Ae^{tA}u )^\perp \big\rangle,\\
&\big\langle A^2e^{tA }u , (Ae^{tA}u )^\perp \big\rangle,~\big\langle Ae^{tA}u,Ae^{tA}u \big\rangle.
\end{align*}

\end{rmk}

\subsection*{A2. The non-vanishing second order derivatives }
Suppose $A\in \mathrm{Gl(2,\mathbb{R})}$. Then $A$ is similar to one of the following three types:
(1):$\left(
                \begin{array}{cc}
                  \lambda_1 & 0 \\
                  0 & \lambda_2 \\
                \end{array}
              \right)$,
              (2):$\left(
              \begin{array}{cc}
              \lambda &0\\
              1& \lambda\\
              \end{array}
              \right)$,
              (3):$\left(
              \begin{array}{cc}
              \alpha & -\beta \\
              \beta & \alpha\\
              \end{array}
              \right)$.
($\lambda_1\neq \lambda_2,\lambda>0,\alpha^2+\beta^2>0$.)
\begin{pro}\label{4.1}\emph{
\begin{enumerate}
\item[(1)] If $A$ is of the third type, $F^*_{t,u}$ is a linear function;
\item[(2)] If $A$ is of the first type, the second order derivative $\frac{d^2F^*_{t,u}}{dy^2}(0) $ does not vanish (except when $u$ is an eigenvector), and therefore, $F^*_{t,u} $ is nonlinear.
\item[(3)]\label{elliptic} If $A$ is of the second type, $F^*_{t,u} $ is generally nonlinear.
\end{enumerate}}
\end{pro}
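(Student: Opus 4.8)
The plan is to handle the three normal forms for $A$ in turn, in each case starting from the formula for $\frac{d^2F^*_{t,u}}{dy^2}(0)$ obtained in the preceding proposition together with the identity $\frac{d\tau}{dy}(0)=-\langle e^{tA}(Au)^\perp,Ae^{tA}u\rangle/\langle Ae^{tA}u,Ae^{tA}u\rangle$. As the preceding remark indicates, everything then reduces to evaluating the four inner products $\langle e^{tA}(Au)^\perp,Ae^{tA}u\rangle$, $\langle Ae^{tA}(Au)^\perp,(Ae^{tA}u)^\perp\rangle$, $\langle A^2e^{tA}u,(Ae^{tA}u)^\perp\rangle$ and $\langle Ae^{tA}u,Ae^{tA}u\rangle$ for the explicit form of $e^{tA}$ attached to each type.

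For the third type I would exploit conformality. Writing $J$ for the rotation by $\pi/2$, a matrix of the third type equals $\alpha I+\beta J$, and $e^{sA}=e^{\alpha s}(\cos\beta s\,I+\sin\beta s\,J)$; in particular both $A$ and $e^{sA}$ commute with $J$. Hence $e^{sA}(Au)^\perp=e^{sA}JAu=Je^{sA}Au=(Ae^{sA}u)^\perp$, which is orthogonal to $Ae^{sA}u$. Plugging this into the equation $H(t,u,0,\tau,y)=0$ defining $\tau=\tau(y)$, I would check that $\tau\equiv 0$ solves it for every small $y$ (the base-point time $t$ already lands the orbit of $u$ on the correct transversal, and the conformal correction term drops out); since $\partial H/\partial\tau=1$ at the base point, the implicit function theorem makes the branch through the origin unique, so $\tau(y)\equiv 0$. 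Substituting $\tau\equiv 0$ into the formula for $P^*$ collapses it to $\frac{y}{\|Ae^{tA}u\|}(Ae^{tA}u)^\perp$, whence $F^*_{t,u}(y)=y$; in particular $F^*_{t,u}$ is linear.

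For the first type I would take $e^{tA}=\mathrm{diag}(e^{\lambda_1 t},e^{\lambda_2 t})$ and $u=(x_1,x_2)$, compute the four inner products directly (each is an explicit closed form in $e^{\lambda_i t}$, $\lambda_i$, $x_i$), and assemble and factor them to
\[
\frac{d^2F^*_{t,u}}{dy^2}(0)=\frac{\lambda_1^2\lambda_2^2\,x_1x_2\,(e^{2\lambda_2 t}-e^{2\lambda_1 t})\,e^{(\lambda_1+\lambda_2)t}}{\|Ae^{tA}u\|^6}\,b(t),
\]
with $b(t)=-\lambda_1x_1^2e^{2\lambda_1 t}(\|Au\|^2+\lambda_1\langle Au,u\rangle)-\lambda_2x_2^2e^{2\lambda_2 t}(\|Au\|^2+\lambda_2\langle Au,u\rangle)$. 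The prefactor is nonzero precisely when $t\neq 0$ and $x_1x_2\neq 0$, i.e.\ when $u$ is not an eigenvector. For $b(t)$: if $\lambda_1\lambda_2>0$ all of its terms carry the same sign and it never vanishes; if $\lambda_1\lambda_2<0$, $b$ is a linear combination of the two distinct exponentials $e^{2\lambda_1 t},e^{2\lambda_2 t}$ with coefficients not both zero, hence has at most one zero. Either way, for $u$ not an eigenvector the second derivative is nonzero for all but at most one $t\neq 0$, so $F^*_{t,u}$ is nonlinear.

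For the second type I would run the same scheme with $e^{tA}=e^{\lambda t}\left(\begin{smallmatrix}1&0\\t&1\end{smallmatrix}\right)$: the four inner products become polynomials in $t$ times powers of $e^{\lambda t}$, and the resulting expression for $\frac{d^2F^*_{t,u}}{dy^2}(0)$ is a rational function of $t$ which one checks is not identically zero for generic $u$ — for instance by inspecting its leading behavior as $t\to+\infty$ or evaluating at one convenient $t$ — giving the \emph{generally nonlinear} conclusion. The routine part of all this is the bookkeeping of the four inner products in each normal form. The main obstacle is the vanishing analysis of $b(t)$ (and of its analogue for the second type) in the saddle subcase $\lambda_1\lambda_2<0$: the sign argument is no longer available, and one must instead use the linear independence of $e^{2\lambda_1 t}$ and $e^{2\lambda_2 t}$ to see that $b$ can vanish at most once — which is why the statement is really about all but finitely many $t$ rather than literally every $t$.
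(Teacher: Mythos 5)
Your proposal is correct and follows essentially the same route as the paper: conformality of $e^{tA}$ forcing $\tau\equiv 0$ for the focus, explicit evaluation and factorization of the four inner products for the diagonal case (your $b(t)$ is exactly the paper's $S(\lambda_1,x_1,\lambda_2,x_2)e^{2t\lambda_1}+S(\lambda_2,x_2,\lambda_1,x_1)e^{2t\lambda_2}$ up to sign, since $\|Au\|^2+\lambda_i\langle Au,u\rangle=R(\lambda_i,x_i,\lambda_j,x_j)$), and the leading coefficient in $t$ for the Jordan block. Your extra observation in the saddle subcase $\lambda_1\lambda_2<0$ --- that $b(t)$, being a nontrivial combination of the independent exponentials $e^{2\lambda_1t}$ and $e^{2\lambda_2t}$ with possibly opposite-signed coefficients, can vanish at one exceptional $t$ --- is a genuine and correct sharpening of the paper's blanket claim that the second derivative ``does not vanish,'' and it does not affect the conclusion that $F^*_{t,u}$ is nonlinear.
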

\begin{rmk}
Since a unit eigenvector is a singularity of the extended flow by Remark \ref{2}, the extended rescaled Poincar\'e map over the eigenvector is the identity. The sense we mean by 'generally' in item (3) will be illustrated in the proof.
\end{rmk}
\begin{proof}
\textbf{The third type:}
Suppose $A=\left(
\begin{array}{cc}
\alpha &\-\beta\\
\beta & \alpha\\
\end{array}
\right)$, $x=r(\cos \theta,\sin \theta)$. Then $e^{tA}x= re^{t\alpha}\big(\cos(\theta+t\beta ),\sin(\theta+t\beta)\big)$.
This implies that $\phi_t=e^{tA}$ is conformal. Therefore, for any unit vector $u$, the orthogonal section to $Ax$ at $u$ is mapped by $\phi_t$ to the orthogonal section at $e^{tA}u$. Consequently, one has $\tau(t,u,y)=0$ and the following holds:
\begin{align*}
F^*_{t,u}(y)=&\big\langle \frac{e^{tA }y(Au )^\perp}{\Vert Ae^{tA}u \Vert},\frac{(Ae^{tA}u )^\perp}{\Vert (Ae^{tA}u )^\perp \Vert } \big\rangle\\
=&y\big\langle \frac{(Ae^{tA }u )^\perp}{\Vert Ae^{tA}u \Vert},\frac{(Ae^{tA}u )^\perp}{\Vert (Ae^{tA}u )^\perp \Vert } \big\rangle\\
=&y.
\end{align*}
So we have shown that the extended rescaled Poincar\'e map under moving frame $ F^*_{t,u}$ is linear if the singularity is a focus.

\textbf{The first type:}
Suppose $A=\left(
                \begin{array}{cc}
                  \lambda_1 & 0 \\
                  0 & \lambda_2 \\
                \end{array}
              \right) $, $ \lambda_1\neq\lambda_2,\lambda_1\lambda_2\neq 0 $.
For any unit vector $u=(x_1,x_2)$,
the following equations hold:
\begin{align}
\frac{d^2F^*_{t,u}}{dy^2}(0)=&2\frac{\big\langle Ae^{tA }(Au )^\perp , (Ae^{tA}u )^\perp \big\rangle}{\big\langle Ae^{tA}u,Ae^{tA}u \big\rangle}\frac{d\tau}{dy}(0)+\frac{\big\langle A^2e^{tA }u , (Ae^{tA}u )^\perp \big\rangle}{\big\langle Ae^{tA}u,Ae^{tA}u \big\rangle}\big(\frac{d\tau}{dy}(0) \big)^2\\
=&\frac{\lambda_1^2\lambda_2^2x_1x_2e^{t(\lambda_1+\lambda_2) }(e^{ 2t\lambda_1}-e^{2t\lambda_2 } )}{(\lambda_1^2x_1^2e^{2t\lambda_1 }+\lambda_2^2x_2^2e^{2t\lambda_2 } )^3}\\
&\cdot\big(S(\lambda_1,x_1,\lambda_2,x_2 )e^{ 2t\lambda_1}+ S(\lambda_2,x_2,\lambda_1,x_1 )e^{ 2t\lambda_2}\big),
\end{align}
with $S(\lambda_1,x_1,\lambda_2,x_2 )=(2\lambda_1^2x_1^2+\lambda_2^2x_2^2+\lambda_1\lambda_2x_2^2 )\lambda_1x_1^2$.

Let us define: $R(\lambda_1,x_1,\lambda_2,x_2 )=2\lambda_1^2x_1^2+\lambda_2^2x_2^2+\lambda_1\lambda_2x_2^2$.
For $u=(x_1,x_2)$ such that $x_1x_2\neq 0$, the equation $\lambda_1\neq\lambda_2$ implies $R(\lambda_1,x_1,\lambda_2,x_2 ) $ and $R(\lambda_2,x_2,\lambda_1,x_1 ) $ can not vanish simultaneously. By (3.27) and (3.28), the second order derivative of the extended rescaled Poincar\'e map $F^*_{t,u}$ does not vanish.

\textbf{The second type:}
Suppose $A=\left(
              \begin{array}{cc}
              \lambda &0\\
              1& \lambda\\
              \end{array}
              \right)$, $\lambda\neq 0$.
Let $u=(x_1,x_2) \in S^1$.
The second order derivative of the extended rescaled Poincar\'e map is
\begin{align*}
\frac{d^2F^*_{t,u}}{dy^2}(0)=&2\frac{\big\langle Ae^{tA }(Au )^\perp , (Ae^{tA}u )^\perp \big\rangle}{\big\langle Ae^{tA}u,Ae^{tA}u \big\rangle}\frac{d\tau}{dy}(0)+\frac{\big\langle A^2e^{tA }u , (Ae^{tA}u )^\perp \big\rangle}{\big\langle Ae^{tA}u,Ae^{tA}u \big\rangle}\big(\frac{d\tau}{dy}(0) \big)^2\\
=&\frac{\lambda^2\big(\lambda^2tx_1^2-\lambda^2tx_2^2-2\lambda tx_1x_2-tx_1^2-\lambda x_1(\lambda x_2+x_1 )t^2 \big)}{\big(\lambda^2x_1^2+(\lambda x_2+\lambda tx_1+x_1)^2 \big)^3}\cdot P,
\end{align*}
with the coefficient of $t^2$ in the polynomial $P $ equal to $- \lambda x_1^2\big((2\lambda^2+1 )x_1^2+3\lambda x_1 x_2+2\lambda^2x_2^2 \big)$.
The coefficient of the highest order term of $t$ in $\frac{d^2F^*_{t,u}}{dy^2}(0)$ is
\begin{equation}
\frac{\lambda^4 x_1^3(\lambda x_2+x_1 )\big((2\lambda^2+1 )x_1^2+3\lambda x_1 x_2+2\lambda^2x_2^2 \big)}{\big(\lambda^2x_1^2+(\lambda x_2+x_1)^2 \big)^3 }.
\end{equation}
Since the polynomial $ (2\lambda^2+1 )x_1^2+3\lambda x_1 x_2+2\lambda^2x_2^2 $ is positive definite, (3.29) does not vanish if $x_1\neq 0,\lambda x_2+x_1\neq 0 $. For $x_1=0$, $u$ is an eigenvector. For $ \lambda x_2+x_1= 0$, the computation is involved so we prefer not to check out whether $\frac{d^2F^*_{t,u}}{dy^2}(0)$ vanishes. In conclusion, the extended rescaled Poincar\'e map $F^*_{t,u}$ is generally nonlinear. Therefore the proof of the proposition is finished.
\end{proof}
\section*{Acknowledgements}
The authors express their deep gratitude to Prof. Lan Wen and Prof. Shaobo Gan for useful discussions and encouragements.
Y. Zhang is partially supported by the NSFC grant 11701200 and Hubei Providence Youth Science and Technology Scholar funding.

\end{document}